\newcounter{daggerfootnote}
\tikzset{commutative diagrams/.cd,arrow style=tikz,diagrams={>=latex'}}
\newcommand{\RR}{\mathbb{R}}
\newcommand{\conv}{\textnormal{conv}}
\theoremstyle{plain}
\newtheorem{Th}{Theorem}[section]
\newtheorem{Lem}[Th]{Lemma}
\newtheorem{Prob}[Th]{Problem}
\newtheorem{Cor}[Th]{Corollary}
\newtheorem{Claim}[Th]{Claim}
\theoremstyle{definition}
\newtheorem{Obs}[Th]{Observation}
\theoremstyle{plain}
\theoremstyle{remark}
\numberwithin{equation}{section}
\date{}
\begin{document}

\title{A positive fraction mutually avoiding sets theorem}

\author{Mozhgan Mirzaei\thanks{Department of Mathematics,  University of California at San Diego, La Jolla, CA, 92093 USA.  Supported by NSF grant DMS-1800736. Email:
{\tt momirzae@ucsd.edu}.}\and Andrew Suk\thanks{Department of Mathematics,  University of California at San Diego, La Jolla, CA, 92093 USA. Supported by NSF grant DMS-1800736, an NSF CAREER award, and an Alfred Sloan Fellowship. Email: {\tt asuk@ucsd.edu}.} }

\maketitle
\begin{abstract}

Two sets $A$ and $B$ of points in the plane are \emph{mutually avoiding} if no line generated by any two points in $A$ intersects the convex hull of $B$, and vice versa.  In 1994, Aronov, Erd\H os, Goddard, Kleitman, Klugerman, Pach, and Schulman showed that every set of $n$ points in the plane in general position contains a pair of mutually avoiding sets each of size at least $\sqrt{n/12}$.  As a corollary, their result implies that, for every set of $n$ points in the plane in general position, one can find at least $\sqrt{n/12}$ segments, each joining two of the points, such that these segments are pairwise crossing.

In this note, we prove a fractional version of their theorem:  for every $k > 0$ there is a constant $\varepsilon_k > 0$ such that any sufficiently large point set $P$ in the plane contains $2k$ subsets $A_1,\ldots, A_{k},B_1,\ldots, B_k$, each of size at least $\varepsilon_k|P|$, such that every pair of sets $A = \{a_1,\ldots, a_k\}$ and $B = \{b_1,\ldots, b_k\}$, with $a_i \in A_i$ and $b_i \in B_i$, are mutually avoiding.   Moreover, we show that $\varepsilon_k = \Omega(1/k^4)$.  Similar results are obtained in higher dimensions.
\end{abstract}

\section{Introduction}

Let $P$ be an $n$-element point set in the plane in general position, that is, no three members are collinear.  For $k > 0$, we say that $P$ contains a \emph{crossing family} of size $k$ if there are $k$ segments whose endpoints are in $P$ that are pairwise crossing.  Crossing families were introduced in 1994 by Aronov, Erd\H os, Goddard, Kleitman, Kluggerman, Pach, and Schulman~{\cite{1}}, who showed that for any given set of $n$ points in the plane in general position, there exists a crossing family of size at least $\sqrt{n/12}$. They raised the following problem (see also Chapter~9 in \cite{brass}).

\begin{Prob}[\cite{1}]
Does there exist a constant $c > 0$ such that every set of $n$ points in the plane in general position contains a crossing family of size at least $cn$?
\end{Prob}

\noindent There have been several results on crossing families over the past several decades~{\cite{3,4,S}}. Very recently, Pach, Rudin, and Tardos showed that any set of $n$ points in general position in the plane determines $n^{1-o(1)}$ pairwise crossing segments. More precisely, they proved the following theorem.

\begin{Th}[\cite{PRT}]
Any set $P$ of $n$ points in general position in the plane determines at least $n / 2^{O(\sqrt{\log n )}}$ pairwise crossing segments.
\end{Th}

The result of Aronov et al.~on crossing families was actually obtained by finding point sets that are \emph{mutually avoiding}.   Let $A$ and $B$ be two disjoint point sets in the plane. We say that $A$ \emph{avoids} $B$ if no line subtended by a pair of points in $A$ intersects the convex hull of $B.$  The sets $A$ and $B$ are \emph{mutually avoiding} if $A$ avoids $B$ and $B$ avoids $A.$  In other words, $A$ and $B$ are mutually avoiding if and only if each point in $A$ "sees" every point in $B$ in the same clockwise order, and vice versa.  Hence two mutually avoiding sets $A$ and $B$, where $|A| = |B| = k$, would yield a crossing family of size $k$.  See Figure $1.$

\begin{figure}[t]
\includegraphics[width=8cm]{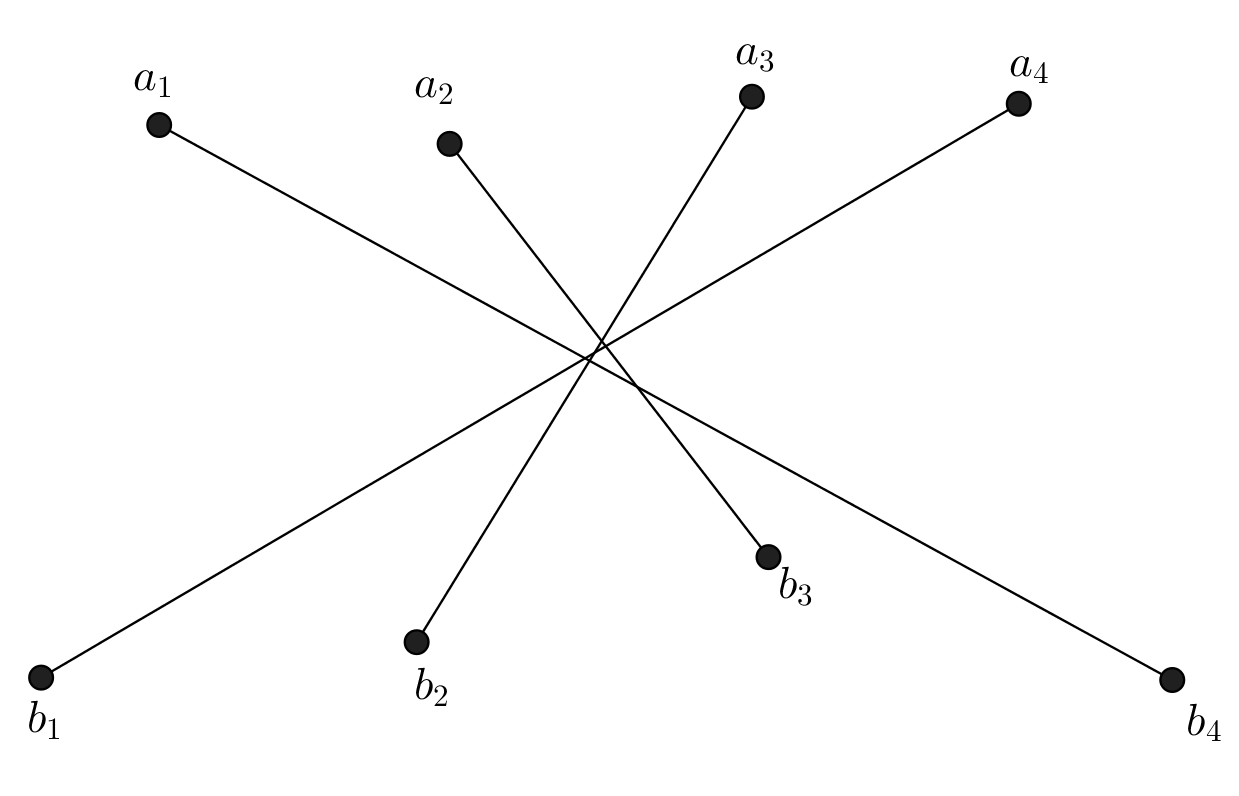}
\centering
\caption{Two mutually avoiding sets $A=\{a_1, a_2, a_3, a_4\}$ and $B=\{b_1, b_2, b_3, b_4\}$ yield a crossing family of size four.}
\end{figure}

\begin{Th}[\cite{1}]\label{crossingr2}
Any set of $n$ points in the plane in general position contains a pair of mutually avoiding sets, each of size at least $\sqrt{n/12}$.

\end{Th}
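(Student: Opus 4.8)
The plan is to reduce to a configuration of two point sets separated by a line and then to extract a large mutually avoiding pair from that configuration by a single Erd\H{o}s--Szekeres step. After a generic rotation we may assume no two points of $P$ share an $x$-coordinate; sweeping a vertical line from left to right, stop it at a position $\ell$ with exactly $\lfloor n/2\rfloor$ points of $P$ strictly to its left, and call these points $L$ and the other $\lceil n/2\rceil$ points $R$. It then suffices to find mutually avoiding $A\subseteq L$ and $B\subseteq R$ with $|A|,|B|\ge\sqrt{n/12}$. For points lying on opposite sides of $\ell$ the avoidance relation has a clean form: every segment from a point of $L$ to a point of $R$ meets $\ell$, so each $p\in L$ orders $R$ linearly by the heights at which the segments $\overline{pq}$ meet $\ell$ (this is just the clockwise order in which $p$ ``sees'' $R$), and symmetrically each $q\in R$ orders $L$. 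By the characterization recalled in the introduction, $A$ and $B$ are mutually avoiding exactly when all $p\in A$ induce one common order on $B$ and all $q\in B$ induce one common order on $A$. Writing $\Phi(p,q)$ for the height of $\overline{pq}\cap\ell$, we are therefore looking for a $k\times k$ submatrix of the matrix $\bigl(\Phi(p,q)\bigr)_{p\in L,\,q\in R}$ whose rows all sort their entries in one way and whose columns all sort their entries in one way, with $k=\lceil\sqrt{n/12}\,\rceil$.

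To find such a submatrix I would sort $L$ and $R$ by $y$-coordinate and use the key elementary fact that for fixed $p_i,p_j\in L$ the sign of $\Phi(p_i,q)-\Phi(p_j,q)$, as $q$ varies over $R$, records on which side of the line $\overline{p_ip_j}$ the point $q$ lies --- and symmetrically with $L$ and $R$ interchanged. Hence the points of a set $B\subseteq R$ all agree on the relative order of $p_i$ and $p_j$ precisely when $\overline{p_ip_j}$ misses $\conv(B)$, and ``$A$ avoids $B$'' simply says no line through two points of $A$ separates $B$. One then selects, among the sign patterns that a chosen subset of $L$ imposes on $R$ (reading $R$ as a word over the positions along $\ell$), a long monotone subsequence of $R$; Erd\H{o}s--Szekeres gives one of length $\sim\sqrt{|R|}$ on which all of the chosen subset of $L$ agrees. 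Running the analogous selection with the roles of $L$ and $R$ exchanged forces the second family of orders as well, and keeping track so the two selections do not spoil each other produces $A$ and $B$ of the required size; the constant $1/12$ comes from the halving in the first step together with these square-root losses.

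The step I expect to be the real obstacle is exactly this coordination of the two sides. The naive route --- first find $A\subseteq L$ that is avoided by \emph{all} of $R$, then refine $R$ to $B$ --- fails, because if $R$ is a long thin set then every line through two points of $L$ can meet $\conv(R)$, so no such $A$ of more than a bounded size exists; the sets must be chosen in tandem, exploiting that $A$ only has to miss $\conv(B)$, not $\conv(R)$, and arranging the orders so that one Erd\H{o}s--Szekeres step governs both sides at once (and squeezing out the exact constant) is the delicate part. I would also record a clean but much weaker shortcut: if $A\cup B$ happens to be in convex position and is separated by $\ell$, then $A$ and $B$ are automatically mutually avoiding, since for any $a,a'\in A$ and $b,b'\in B$ the four points are in convex position while the segments $\overline{aa'}$ (left of $\ell$) and $\overline{bb'}$ (right of $\ell$) are disjoint, so they are opposite edges, not the diagonals, of the quadrilateral, whence $b$ and $b'$ lie on the same side of $\overline{aa'}$ --- but since the left part of a convex polygon straddling $\ell$ is a ``cup'' in $L$, an adversarial $L$ forces this $A$ to have as few as two points, so this shortcut cannot replace the Erd\H{o}s--Szekeres argument.
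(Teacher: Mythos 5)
The paper cites Theorem \ref{crossingr2} from \cite{1} and does not reprove it, so there is no in-paper argument to compare against; this review therefore evaluates your proposal on its own terms.

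Your reduction is sound: after a generic rotation a vertical line $\ell$ splits $P$ into halves $L$ and $R$, the matrix $\Phi(p,q)$ of intersection heights faithfully encodes all the orientations that matter, and mutual avoidance of $A\subseteq L$, $B\subseteq R$ is exactly the statement that the submatrix $\bigl(\Phi(p,q)\bigr)_{p\in A,\,q\in B}$ is ``doubly sorted'' (after sorting $A$ and $B$ by $y$-coordinate, monotone along every row and along every column, with a common direction in each). The convex-position shortcut at the end is also correct as stated. But the proof stops exactly where it has to start. You say that Erd\H{o}s--Szekeres ``gives one of length $\sim\sqrt{|R|}$ on which all of the chosen subset of $L$ agrees'' and that ``keeping track so the two selections do not spoil each other'' yields the result; neither of these is an argument, and you say yourself that this coordination is ``the delicate part.'' Left as is, the plan does not produce a bound. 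Two naive readings both fail: applying Erd\H{o}s--Szekeres once per side costs a square root each and lands at $n^{1/4}$, not $\sqrt{n}$; and the abstract combinatorial statement you are implicitly invoking --- that every $m\times m$ real matrix contains a $k\times k$ doubly sorted submatrix with $k=\Omega(\sqrt{m})$ --- is false for generic matrices, where the largest doubly sorted submatrix has size only polylogarithmic in $m$ (a standard Young-tableaux/union-bound computation). So the $\sqrt{n}$ bound, if it is to come out of this matrix formulation at all, must come from the geometric structure of $\Phi$ (for instance, that the sign pattern of $\Phi(p_i,\cdot)-\Phi(p_j,\cdot)$ over $R$ is a single interval, i.e., the relevant hypergraph has bounded VC dimension or a pseudoline-arrangement structure), and your write-up never identifies, let alone uses, any such structure. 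That missing step is the whole theorem. The actual proof in \cite{1} does not proceed by hunting for a sorted submatrix with Erd\H{o}s--Szekeres; it works directly with the geometry of the halved configuration and an averaging/counting argument to extract the two sides simultaneously, which is how the $\sqrt{n}$ (rather than $n^{1/4}$ or $\log n$) emerges. I would rate this as a correct reduction followed by a genuine gap at the combinatorial core.
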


It was shown by Valtr {\cite{2}} that this bound is best possible up to a constant factor.  In this note, we give a fractional version of Theorem \ref{crossingr2}.

\begin{Th}\label{main}
For every $k > 0$ there is a constant $\varepsilon_k > 0$ such that every sufficiently large point set $P$ in the plane in general position contains $2k$ disjoint subsets $A_1,\ldots, A_{k},B_1,\ldots, B_k$, each of size at least $\varepsilon_k|P|$, such that every pair of sets $A = \{a_1,\ldots, a_k\}$ and $B = \{b_1,\ldots, b_k\}$, with $a_i \in A_i$ and $b_i \in B_i$, are mutually avoiding.   Moreover, $\varepsilon_k = \Omega(1/k^4)$.

\end{Th}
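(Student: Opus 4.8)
The plan is to reduce mutual avoidance to an order-type condition and then to produce the partition by a positive-fraction (``same type''/partitioned Erdős–Szekeres) argument arranged so that only a polynomial factor in $k$ is lost.

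First, observe that mutual avoidance is controlled by very few orientations. For $A=\{a_1,\dots,a_k\}$ and $B=\{b_1,\dots,b_k\}$, the set $A$ avoids $B$ exactly when, for every $i\neq j$, the points $b_1,\dots,b_k$ all lie on one side of the line $a_ia_j$, i.e.\ the orientation of the triple $(a_i,a_j,b_\ell)$ is independent of $\ell$. Consequently, if $A_1,\dots,A_k,B_1,\dots,B_k$ are disjoint subsets of $P$ such that every transversal $a_1\in A_1,\dots,a_k\in A_k,b_1\in B_1,\dots,b_k\in B_k$ is in convex position and the $k$ points $a_1,\dots,a_k$ form a contiguous arc of the convex hull of the whole transversal, then every such transversal is mutually avoiding: a chord through two vertices on the $a$-arc leaves the whole $b$-arc strictly on one side, and symmetrically. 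So it suffices to find $2k$ disjoint subsets, each of size $\ge\varepsilon_k|P|$, all of whose transversals are in convex position in a fixed cyclic order grouping the $A$-parts together.

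To obtain such a configuration with $\varepsilon_k=\Omega(1/k^{4})$ rather than the $2^{-\Omega(k)}$ that a single application of the Same-Type Lemma to $2k$ parts would give, I would build it through a bounded fan-out construction. After an affine map, assume $P\subset[0,1]^2$ has distinct coordinates; cut $P$ into $\Theta(k^{2})$ equal vertical slabs and each slab into $\Theta(k^{2})$ equal horizontal bands, yielding $\Theta(k^{4})$ cells, each with $\ge\varepsilon_k n=\Omega(n/k^{4})$ points. Now run an Erdős–Szekeres/Dilworth argument over the $\Theta(k^{2})$ slabs: in $2k$ of the slabs pick one ``short'' band apiece whose heights are monotone (convex or concave) with consecutive gaps exceeding the cell dimensions; this puts the corresponding $2k$ cells into \emph{robust convex position}, so that every transversal of those cells is genuinely a convex polygon in slab order. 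Label the first $k$ of them (in slab order) $A_1,\dots,A_k$ and the last $k$ of them $B_1,\dots,B_k$. (Equivalently, one may phrase this recursively: split a ``cluster'' already placed on a convex arc into $O(1)$ sub-clusters that are again in robust convex position and compatible with the global picture, losing a constant factor each time along a recursion tree of depth $O(\log k)$, so that the total loss is $k^{O(1)}$.)

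By the first step, every transversal of $A_1,\dots,A_k,B_1,\dots,B_k$ is mutually avoiding, and each part has $\Omega(n/k^{4})$ points, which yields the claimed $\varepsilon_k$. The crux is the quantitative part of the construction: one must certify ``robust convex position'' for cells that are not single points (this is exactly why one refines down to $\Theta(k^{4})$ cells, so that cell diameters are small relative to the bends of the monotone pattern), and one must separately handle the degenerate/near-collinear regime, which by a suitable affine stretch is equivalent to the generic one. For $\mathbb{R}^d$, one projects $P$ to a generic plane, applies the planar result there, and lifts: the separating lines found in the plane extend to separating hyperplanes in $\mathbb{R}^d$, so the preimages of the $A_i,B_j$ are still mutually avoiding, with $\varepsilon$ now also depending on $d$.
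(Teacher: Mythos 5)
Your reduction is sound as far as it goes: if every transversal of $A_1,\dots,A_k,B_1,\dots,B_k$ is in convex position with the $A$-parts on a contiguous arc, then every transversal is indeed a mutually avoiding pair. The problem is that you have now replaced the target condition (mutual avoidance) with a \emph{strictly stronger} one (robust convex position), and the stronger condition cannot be achieved with $\varepsilon_k$ decaying only polynomially. Having $2k$ disjoint parts, each of size $\varepsilon_k n$, with every transversal in convex position is exactly the positive-fraction Erd\H os--Szekeres theorem of B\'ar\'any--Valtr, and there the best possible $\varepsilon_k$ is exponentially small in $k$: taking the Erd\H os--Szekeres lower-bound configuration on $2^{2k-2}$ points with no $2k$ in convex position, and replacing each point by a tiny scaled copy of the same configuration (recursively), yields arbitrarily large sets in which any $2k$ parts with convex transversals must each have at most a $2^{-\Omega(k)}$ fraction of the points. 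So no version of your slab-and-band construction---however the ``robust convex position'' step is implemented---can produce $2k$ parts of size $\Omega(n/k^4)$; the quantitative claim is impossible along this route, not merely unproven. The whole point of Theorem~\ref{main} is that mutual avoidance is genuinely weaker than convex position, and the polynomial bound exploits precisely that gap.

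The paper's actual argument makes no attempt to get convex position. Instead it double-counts: by Theorem~\ref{crossingr2} every $12(40k+1)^2$ points contain a mutually avoiding pair of size $40k+1$, so $P$ contains many such pairs; each such pair $(A,B)$ determines a canonical ``support'' $(A',B')$ of size $10k+1$ each (every fourth point in radial order); by pigeonhole some support $(A',B')$ is shared by a huge number of pairs; the support carves out $10k$ regions $\mathcal{A}_i$ and $10k$ regions $\mathcal{B}_i$, and each supported pair contributes three points of $P$ to each region; the resulting inequality forces $k$ of the $\mathcal{A}_i$'s and $k$ of the $\mathcal{B}_j$'s to contain $\geq n/(1320k)^4$ points of $P$ each, and any transversal of these regions is automatically mutually avoiding by construction of the regions. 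No convex-position step occurs anywhere, which is why the loss is only $k^{O(1)}$. (Your remark about getting the configuration ``through a bounded fan-out'' losing a constant per level over depth $O(\log k)$ describes a plausible route to $k^{O(1)}$ loss \emph{if} the underlying combinatorial target admitted it, but convex position does not.)

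Two further, smaller problems. First, the Dilworth/Erd\H os--Szekeres step over $\Theta(k^2)$ slabs is not quantified: picking bands whose ``heights are monotone with consecutive gaps exceeding the cell dimensions'' is not something an equitable partition guarantees, and it is exactly the kind of certificate that fails on the blown-up Erd\H os--Szekeres sets above. Second, the higher-dimensional reduction via a generic planar projection does not work: in $\mathbb{R}^d$ avoidance is a condition on hyperplanes spanned by $d$-tuples of $A$, and a generic projection of a $d$-tuple to $\mathbb{R}^2$ is not a pair of points spanning a line, so planar mutual avoidance of the projections implies nothing about the original sets. The paper handles $\mathbb{R}^d$ by a polynomial same-type lemma (Lemma~\ref{newsame}) followed by an application of Theorem~\ref{avoidrd} to one representative transversal, which is a genuinely different mechanism.
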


As an immediate corollary, we establish the following fractional version of the crossing families theorem.

\begin{Th}\label{main2}
For every $k > 0$ there is a constant $\varepsilon_k > 0$ such that every sufficiently large point set $P$ in the plane in general position contains $2k$ subsets $A_1,\ldots, A_{k},B_1,\ldots, B_k$, each of size at least $\varepsilon_k|P|$, such that every segment that joins a point from $A_i$ and $B_{k+1-i}$ crosses every segment that joins a point from $A_{k + 1 -i}$ and $B_i$, for $1 \leq i \leq k$.  Moreover, $\varepsilon_k = \Omega(1/k^4)$.
\end{Th}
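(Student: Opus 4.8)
The plan is to deduce Theorem~\ref{main2} directly from Theorem~\ref{main}, using the classical fact (implicit in~\cite{1}) that two mutually avoiding sets of equal size, suitably labeled, determine a crossing family. Recall why: if $A=\{a_1,\ldots,a_k\}$ and $B=\{b_1,\ldots,b_k\}$ are mutually avoiding with $k\ge2$, then their convex hulls are disjoint — a hull edge of one, or a point of one lying inside the other hull, would put a line through two points of one set across the other hull — and no line through two points of $A$ meets $\conv(B)$; it follows that all points of $B$ see $A$ in one common cyclic order and, symmetrically, all points of $A$ see $B$ in one common cyclic order. If one labels $a_1,\ldots,a_k$ and $b_1,\ldots,b_k$ by these two orders, with orientations chosen compatibly, then $a_ib_{k+1-i}$ crosses $a_jb_{k+1-j}$ whenever $i\ne j$; taking $j=k+1-i$ gives that $a_ib_{k+1-i}$ crosses $a_{k+1-i}b_i$.

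First I would apply Theorem~\ref{main} to $P$, obtaining blocks $A_1,\ldots,A_k,B_1,\ldots,B_k$, each of size at least $\varepsilon_k|P|$ with $\varepsilon_k=\Omega(1/k^4)$, such that every transversal $a_i\in A_i$, $b_i\in B_i$ produces a mutually avoiding pair. I would then reorder the blocks so that indices reflect the cyclic orders above; the content of this step is that those orders are well defined \emph{at the level of blocks}. The key point: for $k\ge2$ and distinct $i,j$, every line through a point of $A_i$ and a point of $A_j$ misses $\conv\!\left(\bigcup_\ell B_\ell\right)$. Indeed, given $b\in B_\ell$ and $b''\in B_m$ with $\ell\ne m$, complete $a_i,a_j,b,b''$ to a transversal and apply Theorem~\ref{main}: the line $a_ia_j$ misses the convex hull of that transversal's $B$-part, hence has $b$ and $b''$ on the same side; the case $\ell=m$ reduces to this by comparing $b$ and $b''$ with a point of a third block (available since $k\ge2$). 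That the relevant side does not flip as the endpoints vary within $A_i$ and $A_j$ is a routine convexity fact. Hence the blocks $A_1,\ldots,A_k$ carry a single common cyclic order as seen from $\bigcup_\ell B_\ell$, and symmetrically $B_1,\ldots,B_k$ carry one as seen from $\bigcup_\ell A_\ell$; after permuting indices, and possibly reversing one of the two orders, I may assume each block's index equals its rank.

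Finally, with the blocks labeled this way, fix $i$ with $i\ne k+1-i$ (for odd $k$ the single index $i=k+1-i$ yields a degenerate statement that may be ignored), and fix arbitrary $a\in A_i$, $b\in B_{k+1-i}$, $a'\in A_{k+1-i}$, $b'\in B_i$. Complete them to a transversal with $a_i=a$, $a_{k+1-i}=a'$, $b_i=b'$, $b_{k+1-i}=b$; by Theorem~\ref{main} the resulting sets $\{a_m\}$ and $\{b_m\}$ are mutually avoiding, and since the block labeling realizes the common cyclic orders, these labels are exactly the angular labels of the first paragraph. Therefore $ab=a_ib_{k+1-i}$ crosses $a'b'=a_{k+1-i}b_i$. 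Since $a,b,a',b'$ and $i$ were arbitrary, this is precisely the conclusion of Theorem~\ref{main2}, with $\varepsilon_k=\Omega(1/k^4)$ inherited verbatim from Theorem~\ref{main}. I expect the second step to be the main obstacle — establishing that the cyclic order of the blocks is independent of the chosen transversal, so that the reindexing is legitimate; once that is in hand, everything else is the convex-position bookkeeping sketched above.
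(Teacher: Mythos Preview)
There is a genuine gap in the step you yourself flag as the main obstacle. The claim that ``the relevant side does not flip as the endpoints vary within $A_i$ and $A_j$'' is not a routine convexity fact; in general it is false, and with it the block-level cyclic order you need collapses. Take $k=2$ with $A_2=\{(0,0)\}$, $B_1=\{(10,1)\}$, $B_2=\{(10,-1)\}$, and $A_1=\{(-1,-1),(1,2)\}$. Every transversal $\{a_1,a_2\},\{b_1,b_2\}$ is mutually avoiding (the lines $y=x$ and $y=2x$ both miss the segment $\{x=10,\ |y|\le 1\}$, and the line $x=10$ misses both $A$-segments), so this family satisfies the full conclusion of Theorem~\ref{main}. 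Yet the orientation of $(a_1,a_2,b_1)$ is negative for $a_1=(-1,-1)$ and positive for $a_1=(1,2)$: from $b_1$, the two points of $A_1$ lie on opposite angular sides of $A_2$, so the $A$-blocks have no common cyclic order. Concretely, one checks that for $a_1=(-1,-1)$ the segment $a_1b_2$ misses $a_2b_1$ while $a_1b_1$ crosses $a_2b_2$, and for $a_1=(1,2)$ the pattern reverses; thus neither labeling of $(B_1,B_2)$ makes Theorem~\ref{main2} hold for all transversals. In short, Theorem~\ref{main2} does not follow from Theorem~\ref{main} used as a black box.

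The paper does not attempt this reduction. Its single argument (the section labeled as the proof of Theorem~\ref{main2}) builds the blocks directly as $P\cap\mathcal{A}_j$ and $P\cap\mathcal{B}_j$ for regions $\mathcal{A}_j,\mathcal{B}_j$ wedged between consecutive members $a'_j,a'_{j+1}$ (respectively $b'_j,b'_{j+1}$) of a fixed mutually avoiding support pair $(A',B')$. Because each $\mathcal{A}_j$ sits radially between $a'_j$ and $a'_{j+1}$ from every point on the $B$-side, the blocks inherit an unambiguous geometric order from $(A',B')$, and the crossing pattern of Theorem~\ref{main2} follows at once after relabelling the chosen $k$ regions in that order. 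To carry out your plan you would have to go back into this construction; quoting the statement of Theorem~\ref{main} alone is not enough.
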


Let us remark that if we are not interested in optimizing $\varepsilon_k$ in the theorems above, one can combine the well-known same-type lemma due to Barany and Valtr \cite{BV} (see section \ref{same}) with Theorem \ref{crossingr2} to establish Theorems \ref{main} and \ref{main2} with $\varepsilon_k =2^{-O(k^4)}$.  Hence, the main advantage in the theorems above is that $\varepsilon_k$ decays only polynomially in $k$. We will however, use this approach in higher dimensions with a more refined same-type lemma.

\medskip

\noindent \textbf{Higher dimensions.}  Mutually avoiding sets in $\RR^d$ are defined similarly.  A point set $P$ in $\RR^d$ is in \emph{general position} if no $d+1$ members of $P$ lie on a common hyperplane.  Given two point sets $A$ and $B$ in $\RR^d$, we say that $A$ \emph{avoids} $B$ if no hyperplane generated by a $d$-tuple in $A$ intersects the convex hull of $B$.  The sets $A$ and $B$ are \emph{mutually avoiding} if $A$ avoids $B$ and $B$ avoids $A$.  Aronov et al. proved the following.

\begin{Th}[\cite{1}]\label{avoidrd}
For fixed $d \geq 3$, any set of $n$ points in $\RR^d$ in general position contains a pair of mutually avoiding subsets each of size $\Omega_d(n^{1/(d^2 - d + 1)})$.
\end{Th}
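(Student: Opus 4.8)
The plan is to induct on the dimension $d$, using a generic projection to step from $\RR^d$ down to $\RR^{d-1}$. The base case is cheap: in $\RR^1$, ``$A$ avoids $B$'' simply means $A$ and $B$ lie in disjoint intervals, so the leftmost $\lfloor n/2\rfloor$ points and the remaining points are mutually avoiding, giving exponent $e_1=1$; alternatively one may start at $d=2$ and quote Theorem~\ref{crossingr2}. Assuming the statement in $\RR^{d-1}$ with guaranteed size $\Omega_d(n^{e_{d-1}})$, I aim to produce mutually avoiding sets in $\RR^d$ of size $\Omega_d(n^{e_d})$, where $1/e_d = 1/e_{d-1}+2(d-1)$; telescoping from $1/e_1 = 1$ gives $1/e_d = 1 + 2\binom{d}{2} = d^2-d+1$, which is the claimed bound.

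\textbf{Reducing the dimension.} Pick a direction $u$ generic for $P$ and project $P$ along $u$ onto a hyperplane $H\cong\RR^{d-1}$; this is injective, so we get $n$ points $\bar P$ in general position in $\RR^{d-1}$. By induction there are mutually avoiding $\bar A,\bar B\subseteq\bar P$, each of size $N=\Omega_d(n^{e_{d-1}})$; let $A_0,B_0\subseteq P$ be their preimages. These are not yet mutually avoiding in $\RR^d$: a hyperplane spanned by a $d$-subset of $A_0$ whose direction is not parallel to $u$ projects onto all of $H$, so it is invisible to the $(d-1)$-dimensional avoidance. Write $a=(\bar a,h_a)$ for $a\in A_0$, with $\bar a$ its projection and height $h_a=\langle u,a\rangle$, and similarly for $B_0$. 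Generically a $d$-subset $\{a_1,\dots,a_d\}\subseteq A_0$ spans the graph of the unique affine map $\ell\colon H\to\RR$ with $\ell(\bar a_i)=h_{a_i}$, and this hyperplane misses $\conv(B)$ exactly when $h_b-\ell(\bar b)$ has one fixed sign over all $b\in B$. So, given that the bases $\bar A,\bar B$ are already in avoiding position and carry a natural linear order, mutual avoidance upstairs reduces to a one-dimensional ``higher-order convexity'' condition on the heights; the model to keep in mind is a configuration on the moment curve, where the first-half/second-half split is always mutually avoiding.

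\textbf{The refinement and the main obstacle.} It remains to pass to subsets $A\subseteq A_0$, $B\subseteq B_0$ realizing this height condition in both directions. I would do this by carrying the orders that $\bar A,\bar B$ inherit as avoiding sets up to $A_0,B_0$, recording the sign of each relevant interpolation determinant, and extracting homogeneous subsequences of the height sequences by an Erd\H{o}s--Szekeres/Dilworth-type argument; splitting the resulting ordered sets into first and second halves then gives mutually avoiding $A,B$ in $\RR^d$. The crux --- and the reason the theorem has this clean polynomial exponent rather than the exponential one produced by a blunt appeal to the same-type lemma --- is that the refinement must cost only a \emph{polynomial} factor, and the bulk of the work is the bookkeeping that pins this loss down to the term $2(d-1)$ in $1/e_d$. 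I expect the two awkward points to be: (i) making one global generic choice of $u$ compatible with all the sign and determinant conditions encountered along the way, and (ii) checking that the extraction loses exactly the claimed polynomial amount rather than merely something polynomial; the geometric idea itself (project, lift, straighten the heights) is, I believe, the easy part.
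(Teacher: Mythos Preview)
This theorem is quoted from Aronov et al.\ \cite{1}; the present paper does not give its own proof, so there is nothing in-paper to compare against. I will therefore assess your proposal on its own terms.

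Your plan reverse-engineers the exponent correctly: the recursion $1/e_d = 1/e_{d-1} + 2(d-1)$ telescopes to $d^2-d+1$, and the project--lift picture is natural. But what you have written is a strategy, not a proof. The entire content of the theorem is precisely the step you defer: passing from $A_0,B_0$ (whose projections are mutually avoiding in $H$) to genuinely mutually avoiding $A\subseteq A_0$, $B\subseteq B_0$ at only polynomial cost. You describe this as ``higher-order convexity on the heights'' and propose an Erd\H{o}s--Szekeres/Dilworth extraction, but you neither specify which sign patterns you are homogenizing nor verify that the extraction loses only the factor corresponding to $2(d-1)$ in the reciprocal exponent. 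As you yourself flag in point (ii), checking that the loss is exactly this amount is the heart of the matter; without it there is no proof, only a target matched by a plausible recursion.

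There is also a concrete worry about whether the reduction is really ``one-dimensional.'' The condition that $A$ avoid $B$ asks that for every $d$-subset $S\subseteq A$ and every $b\in B$ the orientation $\chi(S\cup\{b\})$ has a single sign. Even with $\bar A,\bar B$ mutually avoiding downstairs, these signs are indexed by $d$-subsets of $A$, not by single elements, so this is not a priori a one-parameter monotonicity question that a cup--cap argument handles out of the box. You need to argue that the $(d-1)$-dimensional avoidance collapses these $\binom{|A|}{d}\cdot|B|$ conditions to something governed by a linear order on $A$ and on $B$ --- for instance, that the relevant $(d+1)\times(d+1)$ determinant factors as a fixed-sign piece coming from $H$ times a height-interpolation piece depending only on consecutive indices --- before any Erd\H{o}s--Szekeres step applies. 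Until that reduction is written out and the loss counted, the proposal remains a heuristic that happens to hit the right exponent rather than a proof of the theorem.
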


\noindent In the other direction, Valtr showed in \cite{2} that by taking a $k\times \cdots \times k$ grid, where $k = \lfloor n^{1/d}\rfloor$, and slightly perturbing the $n$ points so that the resulting set is in general position, one obtains a point set that does not contain mutually avoiding sets of size $cn^{1 - 1/d}$, where $c = c(d)$.

Our next result is a fractional version of Theorem \ref{avoidrd}.

\begin{Th}\label{rd}
For $d \geq 3$ and $k \geq 2$, there is a constant $\varepsilon_{d,k}$, such that every sufficiently large point set $P$ in $\RR^d$ in general position contains $2k$ subsets $A_1,\ldots, A_{k},B_1,\ldots, B_k$, each of size at least $\varepsilon_k |P|$, such that every pair of sets $A = \{a_1,\ldots, a_k\}$ and $B = \{b_1,\ldots, b_k\}$, with $a_i \in A_i$ and $b_i \in B_i$, are mutually avoiding.  Moreover, $\varepsilon_{d,k} = 1/k^{c_d}$ where $c_d > 0$ depends only on $d$.
\end{Th}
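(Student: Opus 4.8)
The plan is to carry out the strategy sketched in the remark above in its quantitatively strongest form: combine Theorem~\ref{avoidrd} with a \emph{refined} same-type lemma whose constant, for fixed $d$, decays only polynomially in the number of parts. Let $c_1=c_1(d)>0$ be the constant hidden in the $\Omega_d$ of Theorem~\ref{avoidrd}, and let $N=N(d,k)$ be the least integer with $c_1\,N^{1/(d^2-d+1)}\ge 2k$, so that $N=\Theta_d\!\big(k^{\,d^2-d+1}\big)$. Partition $P$ into $N$ parts $X_1,\dots,X_N$ of nearly equal size $\lfloor |P|/N\rfloor$ or $\lceil |P|/N\rceil$.

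First I would apply the refined same-type lemma to $X_1,\dots,X_N$ to obtain subsets $Y_i\subseteq X_i$ with $|Y_i|\ge c(d,N)\,|X_i|$, where $c(d,N)=N^{-O_d(1)}$, such that every transversal $\{y_1,\dots,y_N\}$ with $y_i\in Y_i$ has one and the same order type. Picking a representative $q_i\in Y_i$, the set $Q=\{q_1,\dots,q_N\}$ is in general position (since $P$ is), so by Theorem~\ref{avoidrd} and the choice of $N$ it contains a mutually avoiding pair $A'=\{q_{a_1},\dots,q_{a_k}\}$ and $B'=\{q_{b_1},\dots,q_{b_k}\}$ with distinct indices $a_1,\dots,a_k,b_1,\dots,b_k\in[N]$ (pass to subsets so that each side has exactly $k$ points). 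Set $A_i:=Y_{a_i}$ and $B_i:=Y_{b_i}$ for $i=1,\dots,k$; these are $2k$ pairwise disjoint subsets of $P$, each of size at least $c(d,N)\,\lfloor |P|/N\rfloor\ge \tfrac{c(d,N)}{2N}\,|P|$ once $|P|$ is large enough.

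The verification is then immediate. Any transversal $\{a_1,\dots,a_k,b_1,\dots,b_k\}$ with $a_i\in A_i$ and $b_i\in B_i$ is a partial transversal of $(Y_1,\dots,Y_N)$, hence has the same order type as the corresponding restriction of $Q$, namely $A'\cup B'$. Whether a bipartitioned point set is mutually avoiding is determined by its order type alone --- it amounts to requiring that for every $d$ indices on one side the $k$ orientations obtained by appending in turn each point of the other side agree, and symmetrically, all under the general-position hypothesis inherited from $P$ --- so $\{a_1,\dots,a_k\}$ and $\{b_1,\dots,b_k\}$ are mutually avoiding. Consequently $\varepsilon_{d,k}=\tfrac{c(d,N)}{2N}=\Theta_d\!\big(N^{-O_d(1)}\big)=\Theta_d\!\big(k^{-c_d}\big)$ for a constant $c_d=c_d(d)>0$.

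The crux is the refined same-type lemma. The classical same-type lemma of B\'ar\'any and Valtr~\cite{BV} has a constant exponentially small in $N$ (it iterates the $(d+1)$-part key lemma over all $\binom{N}{d+1}$ orientations), and plugging that in would only give $\varepsilon_{d,k}=2^{-\mathrm{poly}_d(k)}$, exactly the kind of bound mentioned in the remark for the planar statements; the entire gain in higher dimensions comes from a same-type lemma whose constant is polynomial in $N$ for fixed $d$. Everything else is bookkeeping: choosing $N$ so that Theorem~\ref{avoidrd} outputs at least $2k$ points per side, and composing the exponent $d^2-d+1$ with the polynomial degree of $c(d,N)$ and the extra factor $1/N$ to read off $c_d$.
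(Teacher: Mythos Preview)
Your proposal is correct and follows essentially the same route as the paper: apply a same-type lemma whose constant is polynomial in the number of parts (the paper's Lemma~\ref{newsame}, derived from the Fox--Pach--Suk regularity lemma), pick one representative per part, run Theorem~\ref{avoidrd} on the representatives, and lift the resulting mutually avoiding pair back to the parts using that mutual avoidance is an order-type invariant. The only cosmetic difference is that you first cut $P$ into $N$ equal pieces and then invoke a \emph{colored} polynomial same-type lemma to refine each piece, whereas the paper applies its Lemma~\ref{newsame} directly to $P$ (the equitable-partition step is already inside the regularity lemma), which makes the pre-partitioning step unnecessary and sidesteps the need to cite the colored formulation.
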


\noindent Similar to Theorem \ref{main}, $\varepsilon_{d,k}$ in Theorem \ref{rd} also decays only polynomially in $k$ for fixed $d \geq 3$.  However, $c_d$ does have a rather bad dependency on $d$, $c_d \approx 2^{O(d)}$.

\medskip

 Finally, we establish a result on crossing families in higher dimensions which was also observed by Aronov et al. in \cite{1}.
\section{Proof of Theorem \ref{main}}
\begin{proof} In this section we give the proof of Theorem \ref{main} which closely follows an argument of P\'or and Valtr in \cite{PVPV}. Let $k > 2$ and let $P$ be a set of $n$ points in the plane in general position where $n > (1500k)^4$.  It follows from Theorem $1.2$ that among any $12(40k+1)^2$ points $P$, it is always possible to find two mutually avoiding sets $A \subseteq P$ and $B \subseteq P$ each of size at least $40k+1.$ It follows that $P$ contains at least

\begin{align}\label{2.1}
\frac{\binom{n}{12(40k+1)^2}}{\binom{n-(80k+2)}{12(40k+1)^2-(80k+2)}} = \frac{\binom{n}{80k+2}}{\binom{12(40k+1)^2}{80k+2}}
\end{align}
pairs of mutually avoiding sets, each set of size $40k+1.$ Note that (\ref{2.1}) follows from the equality $$\frac{\binom{m}{a}}{\binom{m-b}{a-b}}=\frac{\binom{m}{b}}{\binom{a}{b}},$$
for positive integers $m,a,b$ where $1 \leq b \leq a \leq m.$\\

Let $A$ and $B$ be a pair of mutually avoiding sets each of size $40k+1$.  For $b \in B,$ label the points in $A$ with $a_1,\ldots, a_{40k+1}$ in radial clockwise order with respect to $b$.  Likewise, for $a \in A$, label the points in $B$ with $b_1,\ldots, b_{40k+1}$ in radial counterclockwise order with respect to $a$.  We say that the pair $(A',B')$ \emph{supports} the pair $(A,B)$ if $A'=\{a_i \in A; i \equiv 1 \textrm{ mod } 4\}$ and $B'= \{b_i \in B; i \equiv 1 \textrm{ mod } 4\}$.   Clearly, $|A'|=|B'|=10k+1.$

Since $P$ has at most $\binom{n}{10k+1}^{2}$ pairs of disjoint subsets with size $10k+1$ each, there is a pair of subsets $(A', B')$ such that $A',B' \subset P, |A'| = |B'| = 10k+1,$ and $(A',B')$ supports at least

\begin{align*}
                 \frac{\binom{n}{80k+2}}{\binom{12(40k+1)^2}{80k+2}  \binom{n}{10k+1}^2} 
                 &> \frac{\left(\frac{n}{80k+2}\right)^{80k+2}}{\left(\frac{12(40k+1)^2e}{80k+2}\right)^{80k+2}\left(\frac{ne}{10k+1}\right)^{20k+2}} \\
                  &> \frac{n^{60k}}{e^{100k+4} 12^{80k+2} (50k)^{141k}} \\ 
                  &> \frac{n^{60k}}{(430k)^{141k}}
\end{align*}

\noindent mutually avoiding pairs $(A,B)$ in $P,$ where $|A| = |B| = 40k+1.$ 
Notice that for the first inequality, we use the inequality $\left(\frac{m}{r}\right)^r < \binom{m}{r} < \left(\frac{me}{r}\right)^r,$ where $1 < r < m.$ To see why the second inequality holds, we claim that $$ \frac{(10k+1)^{20k+2}}{(40k+1)^{160k+4}} > \frac{1}{(50k)^{141k}} \text{ ~as long as~} k > 2.$$ To prove the claim, we need to show that $$(50k)^{141k} > (40k+1)^{140k+2} \left(\frac{40k+1}{10k+1}\right)^{20k+2}.$$ Since $k > 2,$ $(40k+1)^{140k+2} \left(\frac{40k+1}{10k+1}\right)^{20k+2} <(40k+1)^{141k} (\frac{40k+1}{10k+1})^{21k}.$ Therefore, it is enough to show $$(50k)^{141}(10k+1)^{21} >(40k+1)^{162}.$$ It is easy to check that $50^{141}10^{21} > (40.5)^{162}$ (since $k > 2, ~40k+1 < 40.5k$) and this completes the proof of the claim. For the last inequality, it is easy to observe that $e^{100k+4} 12^{80k+2} (50)^{141k} < (430)^{141k},$ for $k>2.$ Note that  
\begin{align*}
e^{100k} < \left(\frac{43}{5}\right)^{46.5k} ~~~\textrm {and }~~~
12^{80k} < \left(\frac{43}{5}\right)^{92.5k}.
\end{align*}
Therefore,
\begin{align*}
e^{100k}12^{80k}12^2 e^4 < \left(\frac{43}{5}\right)^{46.5k} \left(\frac{43}{5}\right)^{92.5k} \left(\frac{43}{5}\right)^{5} < \left(\frac{43}{5}\right)^{141k}.
\end{align*}

\medskip

Set $A'=\{a'_1, \ldots, a'_{10k + 1}\}$ and $B'=\{b'_1, \ldots, b'_{10k + 1}\}.$  For any two consecutive points $a'_i,a'_{i+1} \in A',1 \leq i \leq 10k,$ consider the region $\mathcal{A}_i$ produced by the intersection of regions bounded by the lines $b'_1a'_i, b'_1a'_{i+1}$ and $b'_{10k}a'_i, b'_{10k}a'_{i+1}.$
Similarly, we define the region $\mathcal{B}_i$ produced by the intersection of regions bounded by the lines $a'_1b'_i, a'_1b'_{i+1}$ and $a'_{10k}b'_i, a'_{10k}b'_{i+1}$ for $1 \leq i \leq 10k.$ Therefore, we have $20k$ regions $\mathcal{A}_1, \ldots, \mathcal{A}_{10k}, \mathcal{B}_1, \ldots, \mathcal{B}_{10k}.$ See Figure \ref{2}. 

\begin{figure}[t]
\includegraphics[width=11cm]{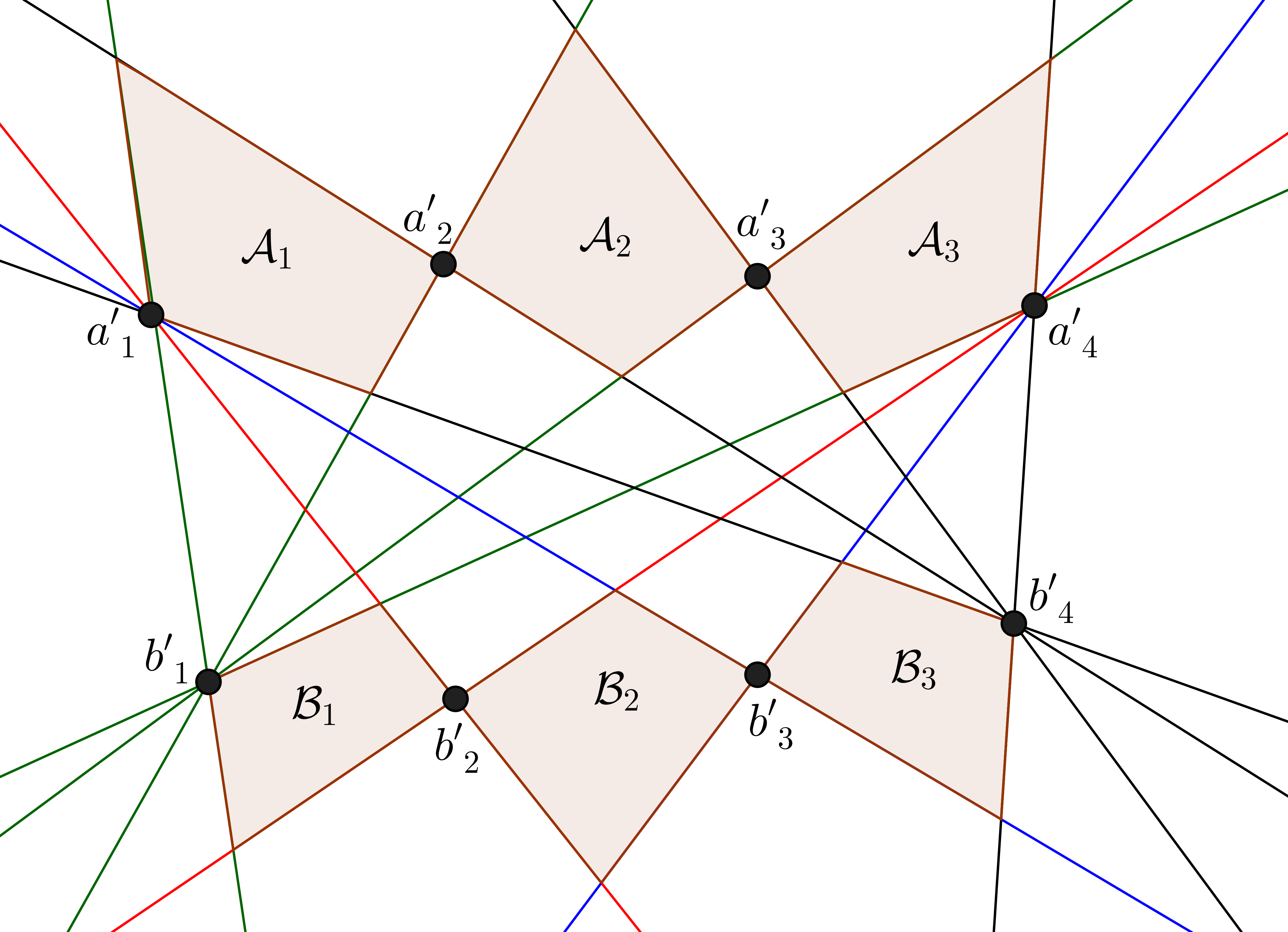}
\centering
\caption{The regions $\mathcal{A}_i$ and $\mathcal{B}_i$ defined by support $A'=\{a'_1, a'_2, a'_3, a'_4\}$ and $B'=\{b'_1, b'_2, b'_3, b'_4\}.$ Let us remark that $4 \neq 10k + 1$ for $k \in \mathbb{Z}.$  The purpose of this figure is to give some intuition on how the regions $A_i$ and $B_i$ are formed.
}\label{2}
\end{figure}

\begin{Obs}
Let $A$ and $B$ be a pair of mutually avoiding sets each of size $40k + 1$. If $(A',B')$ supports $(A,B)$, where $A'= \{a'_1, \ldots, a'_{10k + 1}\}$ and $B'= \{b'_1, \ldots, b'_{10k + 1}\}$, then $A = A' \cup A_1\cup\cdots \cup A_{10k}$ and $B = B' \cup B_1\cup \cdots \cup B_{10k}$, where $|A_i|=|B_i|=3$ for all $ 1 \leq i \leq  10k,$ and $A_i$ lies in region $\mathcal{A}_i$ and $B_i$ lies in region $\mathcal{B}_i$.
\end{Obs}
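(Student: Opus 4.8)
The plan is to establish the Observation by carefully unwinding the radial-order structure that forces the points of $A$ and $B$ into the regions $\mathcal{A}_i$ and $\mathcal{B}_i$. First I would recall the labelling conventions: fixing any $b \in B$, the points $a_1, \ldots, a_{40k+1}$ of $A$ are in radial clockwise order about $b$; the key fact (which is essentially the content of being mutually avoiding) is that this clockwise order is \emph{the same} for every $b \in B$, since $B$ avoids $A$ means no line through two points of $B$ separates the points of $A$, and dually $A$ avoids $B$. So the labels $a_1, \ldots, a_{40k+1}$ are well defined independently of the choice of $b \in B$, and likewise $b_1, \ldots, b_{40k+1}$ independently of the choice of $a \in A$. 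Then $A' = \{a_i : i \equiv 1 \bmod 4\} = \{a_1, a_5, \ldots, a_{40k+1}\}$, which I would rename $a'_1, \ldots, a'_{10k+1}$ so that $a'_j = a_{4j-3}$, and similarly $b'_j = b_{4j-3}$; this gives $|A'| = |B'| = 10k+1$ as claimed, and the three points $a_{4j-2}, a_{4j-1}, a_{4j}$ strictly between $a'_j = a_{4j-3}$ and $a'_{j+1} = a_{4j+1}$ in the common clockwise order about any point of $B$ are exactly the triple I want to place in $\mathcal{A}_j$. So $A = A' \cup A_1 \cup \cdots \cup A_{10k}$ with $A_j = \{a_{4j-2}, a_{4j-1}, a_{4j}\}$, $|A_j| = 3$, and symmetrically for $B$; this decomposition part is immediate from counting, and the substance is the containment $A_j \subseteq \mathcal{A}_j$.

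For the containment, recall that $\mathcal{A}_j$ is the intersection of (a) the region bounded by the two lines $b'_1 a'_j$ and $b'_1 a'_{j+1}$ that is "between" them as seen from $b'_1$, and (b) the analogous region between $b'_{10k} a'_j$ and $b'_{10k} a'_{j+1}$ as seen from $b'_{10k}$. Take a point $a \in A_j$, so $a$ lies strictly between $a'_j$ and $a'_{j+1}$ in the clockwise radial order about \emph{every} point of $B$, in particular about $b'_1$ and about $b'_{10k}$ (both of which are in $B$). Being strictly between $a'_j$ and $a'_{j+1}$ in the radial order about $b'_1$ means precisely that the ray from $b'_1$ through $a$ lies in the angular wedge bounded by the rays $b'_1 a'_j$ and $b'_1 a'_{j+1}$; since $A$ avoids $B$, all of $A$ lies in one of the two closed half-planes determined by each line through two points of $B$, so in fact $a$ lies in the convex angular region at $b'_1$ spanned by $a'_j, a'_{j+1}$ — i.e. in region (a). The identical argument with $b'_{10k}$ puts $a$ in region (b). Hence $a \in \mathcal{A}_j$. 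The argument for $B_j \subseteq \mathcal{B}_j$ is verbatim with the roles of $A$ and $B$ swapped and "clockwise" replaced by "counterclockwise".

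The one point that requires genuine care — and which I expect to be the main obstacle — is justifying that the radial clockwise order of $A$ about $b$ does not depend on the choice of $b \in B$, and more specifically that "strictly between $a'_j$ and $a'_{j+1}$" transfers correctly between $b'_1$ and $b'_{10k}$ even though these two points may "see" $A$ from rather different vantage points. The cleanest way to handle this is to use the standard characterization of mutual avoidance: $A$ and $B$ are mutually avoiding if and only if the line segment joining any point of $A$ to any point of $B$ avoids $\conv(A \setminus \{a\}) \cup \conv(B \setminus \{b\})$ appropriately — or, more usefully here, the fact (already stated in the paper's introduction) that "each point in $A$ sees every point in $B$ in the same clockwise order, and vice versa." I would invoke exactly that sentence: it tells us that the cyclic order of $A$ as seen from any $b \in B$ is a fixed cyclic order, call it $\sigma$, and since $A$ and $B$ are separated by a line (they are disjoint convex-position-compatible sets — any two mutually avoiding sets can be separated by a line, which follows from $A$ avoiding $B$), this cyclic order $\sigma$ restricts to a genuine linear order on $A$ consistent across all $b \in B$. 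With $A'$ selected as every fourth point in this order, consecutive elements $a'_j, a'_{j+1}$ of $A'$ bound a wedge from each $b'_i$, and the three skipped points lie in all these wedges simultaneously; intersecting just the two extreme wedges (from $b'_1$ and $b'_{10k}$) already confines them to $\mathcal{A}_j$. Once this "common order" fact is cited correctly, everything else is a short geometric verification.
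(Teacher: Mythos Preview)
The paper states this Observation without proof, so there is nothing to compare against; your argument is correct and is exactly the intended unpacking of the definitions. The decomposition $A = A' \cup A_1 \cup \cdots \cup A_{10k}$ with $A_j = \{a_{4j-2},a_{4j-1},a_{4j}\}$ is immediate from the definition of ``supports,'' and the containment $A_j \subset \mathcal{A}_j$ follows, as you say, because each $a \in A_j$ lies strictly between $a'_j$ and $a'_{j+1}$ in the radial order about \emph{every} $b \in B$, in particular about $b'_1$ and $b'_{10k}$, which places it in both defining wedges.

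One small correction worth making when you write this up: the reason every $b \in B$ sees $A$ in the same cyclic order is that \emph{$A$ avoids $B$} (not that $B$ avoids $A$). Indeed, for any $a,a' \in A$ the line $aa'$ misses $\conv(B)$, so all of $B$ lies on one side of it, hence the orientation of $(b,a,a')$ is independent of $b \in B$. You eventually invoke the correct direction (and the paper's introductory sentence that each point of $B$ sees $A$ in the same order), so the argument stands, but the parenthetical ``since $B$ avoids $A$ means\ldots'' has the roles reversed. Also note that the paper's regions use $b'_{10k}$ rather than $b'_{10k+1}$; your argument works verbatim for either choice, since both are points of $B$.
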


For $i=1, \ldots, 10k,$ let $\alpha_i,$ respectively $\beta_i,$ denote the number of points of $P$ lying in the interior of $\mathcal{A}_i,$ respectively $\mathcal{B}_i.$ It follows from Observation $2.1$ that $(A',B')$ supports at most $\prod_{i=1}^{10k} \binom{\alpha_i}{3}\prod_{i=1}^{10k}\binom{\beta_i}{3}$ pairs of mutually avoiding sets $(A,B)$, each of size $40k+1$.  Therefore,

\begin{align*}
\frac{n^{60k}}{(430k)^{141k}} \leq  \prod_{i=1}^{10k} \binom{\alpha_i}{3}\prod_{i=1}^{10k}\binom{\beta_i}{3} \leq \prod_{i=1}^{10k} (\alpha_i\beta_i)^3.
\end{align*}

\noindent Without loss of generality, let us relabel the regions $\mathcal{A}_1,\ldots, \mathcal{A}_{10k}, \mathcal{B}_1,\ldots, \mathcal{B}_{10k}$ so that $\alpha_1 \leq \alpha_2 \leq \cdots \leq \alpha_{10k}$ and $\beta_1 \leq \beta_2 \leq \cdots \leq \beta_{10k}.$\\

\begin{Claim}
There exists an $i$ such that $1 \leq i \leq 9k,$ and $\alpha_i,\beta_i \geq \frac{n}{(1320k)^4}.$
\end{Claim}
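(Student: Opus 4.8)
The plan is to argue by contradiction, using two inputs. First, taking cube roots in the last displayed inequality (together with $\binom{\alpha_i}{3}\le\alpha_i^{3}$ and $\binom{\beta_i}{3}\le\beta_i^{3}$) gives $\prod_{i=1}^{10k}\alpha_i\beta_i\ge n^{20k}/(430k)^{47k}$. Second, the $20k$ regions $\mathcal{A}_1,\dots,\mathcal{A}_{10k},\mathcal{B}_1,\dots,\mathcal{B}_{10k}$ have pairwise disjoint interiors --- this is clear from the construction and Figure~2, since consecutive cells on the $A'$ side lie in distinct wedges at $b_1'$ (and at $b_{10k}'$), likewise on the $B'$ side, and the $A'$ cells are separated from the $B'$ cells by any line separating $\conv(A')$ from $\conv(B')$ --- so $\sum_{i=1}^{10k}\alpha_i+\sum_{i=1}^{10k}\beta_i\le n$. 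Write $t=n/(1320k)^{4}$, and suppose for contradiction that no index $i$ with $1\le i\le 9k$ satisfies $\alpha_i,\beta_i\ge t$.

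The first step is to localize this failure using monotonicity. Because $\alpha_1\le\cdots\le\alpha_{10k}$ and $\beta_1\le\cdots\le\beta_{10k}$, if there were some $\alpha_i\ge t$ with $i\le 9k$ and also some $\beta_j\ge t$ with $j\le 9k$, then $\ell:=\max(i,j)\le 9k$ would satisfy $\alpha_\ell\ge\alpha_i\ge t$ and $\beta_\ell\ge\beta_j\ge t$, contradicting our assumption. Hence either $\alpha_1,\dots,\alpha_{9k}<t$ or $\beta_1,\dots,\beta_{9k}<t$; since the statement is symmetric in the $A_i$'s and $B_i$'s, we may assume the former.

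Next I would split the product: $\prod_{i=1}^{10k}\alpha_i\beta_i=\bigl(\prod_{i=1}^{9k}\alpha_i\bigr)\bigl(\prod_{i=9k+1}^{10k}\alpha_i\prod_{i=1}^{10k}\beta_i\bigr)$. The first factor is $<t^{9k}$, while the second is a product of $11k$ nonnegative reals whose sum is at most $n$ (by the disjointness above), hence at most $(n/11k)^{11k}$ by the AM--GM inequality. Combining,
\[
\frac{n^{20k}}{(430k)^{47k}}\le\prod_{i=1}^{10k}\alpha_i\beta_i<t^{9k}\Bigl(\frac{n}{11k}\Bigr)^{11k},
\]
and solving for $t$ gives $t> n\cdot 11^{11/9}/(430^{47/9}k^{4})$. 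It remains only to check the elementary numerical inequality $11^{11/9}/430^{47/9}\ge 1/1320^{4}$, i.e.\ $1320^{36}\cdot 11^{11}\ge 430^{47}$; granting it, $t> n/(1320k)^{4}=t$, which is absurd and proves the claim.

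I expect the only point requiring care to be this closing inequality $1320^{36}\cdot 11^{11}\ge 430^{47}$: it is true but very tight (the two sides differ by a factor of roughly $1.05$), so the constant $1320$, the cutoff $9k$, and the earlier parameters --- the size $40k+1$ and the residue-class thinning producing $|A'|=|B'|=10k+1$ --- are essentially forced and must be carried through consistently. It is also worth stressing that the \emph{joint} disjointness of all $20k$ regions is genuinely used: replacing $\sum_i(\alpha_i+\beta_i)\le n$ by the two separate bounds $\sum_i\alpha_i\le n$ and $\sum_i\beta_i\le n$ weakens the AM--GM estimate just enough to break the argument.
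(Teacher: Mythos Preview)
Your proof is correct and follows essentially the same approach as the paper: assume the small-$\alpha$ case (the paper treats $\alpha$ and $\beta$ separately, whereas you first reduce to one case via the $\max(i,j)$ monotonicity argument), split the product at $9k$, bound the tail by AM--GM using $\sum_i(\alpha_i+\beta_i)\le n$, and arrive at the same numerical inequality $1320^{36}\cdot 11^{11}\ge 430^{47}$. Your added remarks---the explicit disjointness of all $20k$ regions and the monotonicity step producing a single index~$i$---are actually cleaner than the paper's exposition, which leaves both points implicit.
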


\begin{proof}
For the sake of contradiction, suppose for each $i,$ $1 \leq i \leq 9k,$ we have $\alpha_i < \frac{n}{(1320k)^4}.$
Therefore,
\begin{align*}
\frac{n^{20k}}{(430k)^{47k}}\leq \prod_{i=1}^{10k} \alpha_i\beta_i
&= \prod_{i=1}^{9k} \alpha_i \left( \prod_{i=9k+1}^{10k}\alpha_i \prod_{i=1}^{10k} \beta_i )\right)\\
& \leq \left(\frac{n}{(1320k)^4}\right)^{9k} \left(\frac{\Sigma_{i=9k+1}^{10k} \alpha_i + \Sigma_{i=1}^{10k} \beta_i}{11k}\right)^{11k}\\
& < \left(\frac{n}{(1320k)^4}\right)^{9k} \left(\frac{n}{11k}\right)^{11k}\\
&=  \frac{n^{20k}}{(1320k)^{{36k}}({11k})^{11k}}.\\
\end{align*}
Hence, we have 
\begin{align}\label{2.2}
\frac{n^{20k}}{(430k)^{47k}} < \frac{n^{20k}}{(1320k)^{{36k}}({11k})^{11k}} \cdot
\end{align}
After simplifying (\ref{2.2}), we get $ \frac{1320^{36}11^{11}}{430^{47}} < 1$ which is a contradiction as $ \frac{1320^{36}11^{11}}{430^{47}} \approx 1.054$. Thus, there exists an $i,$ $1 \leq i \leq 9k$, with $\alpha_i \geq \frac{n}{(1320k)^4}.$ With a similar calculation, there exists an $i,$ $1 \leq i \leq 9k$ with $\beta_i \geq \frac{n}{(1320k)^4}.$\end{proof}

By setting $A^{\ast}_i = P\cap \mathcal{A}_{9k + i}$ and $B^{\ast}_i = P\cap \mathcal{B}_{9k + i}$, for $1\leq i \leq k$,  we have $2k$ subsets $A^{\ast}_{1},\ldots, A^{\ast}_{k},B^{\ast}_{1},\ldots, B^{\ast}_{k}$, each of size at least $\frac{n}{(1320k)^4}$, such that every pair of subsets $\{a_1,\ldots, a_k\}$ and $\{b_1,\ldots, b_k\}$, where $a_i \in A^{\ast}_i$ and $b_i \in B^{\ast}_i$, is mutually avoiding.\end{proof}

\section{Mutually avoiding sets in higher dimensions}\label{mutrd}

In this section we will prove Theorem \ref{rd}.  Let $ P = (p_1, \ldots, p_n)$ be an $n$-element point sequence in $\mathbb{R}^d$ in general position.  The \emph{order type} of $P$ is the mapping $\chi:{P\choose d  + 1}\rightarrow \{+1,-1\}$ (positive orientation, negative orientation), assigning each $(d+1)$-tuple of $P$ its orientation.  More precisely, by setting $p_i = (a_{i,1}, a_{i,2}, \ldots, a_{i,d}) \in \mathbb{R}^d$,

\[
\chi (\{p_{i_1}, p_{i_2}, \ldots, p_{i_{d+1}} \}) = sgn \ det
\begin{bmatrix}
    1 & 1 & \dots  & 1 \\
    a_{i_1,1} & a_{i_2,1}  & \dots  & a_{i_{d+1},1} \\
    \vdots & \vdots  & \ddots & \vdots \\
    a_{i_1,d} & a_{i_2,d}  & \dots  & a_{i_{d+1},d}
\end{bmatrix},
\]
where $i_1 < i_2 < \cdots < i_{d+1}.$

\noindent Hence two point sequences $P = (p_1,\ldots, p_n)$ and $Q = (q_1,\ldots, q_n)$ have the same order-type if and only if they are ``combinatorially equivalent." See \cite{GP} and \cite{6}  for more background on order-types.

Given $k$ disjoint subsets $P_1,\ldots, P_k\subset P$, a \emph{transversal} of $(P_1,\ldots, P_k)$ is any $k$-element sequence $(p_1,\ldots, p_k)$ such that $p_i \in P_i$ for all $i$.  We say that the $k$-tuple $(P_1,\ldots, P_k)$ has \emph{same-type transversals} if all of its transversals have the same order-type.  In 1998, B\'ar\'any and Valtr proved the following same-type lemma.

\begin{Lem}[\cite{BV}]\label{same}
Let $P = (p_1,\ldots, p_n)$ be an $n$-element point sequence in $\RR^d$ in general position.  Then for $k >0$, there is an $\varepsilon = \varepsilon(d,k)$, such that one can find disjoint subsets $P_1,\ldots, P_k \subset P$ such that $(P_1,\ldots, P_k)$ has same-type transversals and $|P_i| \geq \varepsilon n$.

\end{Lem}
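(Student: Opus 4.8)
The plan is to reduce the lemma to the case $k=d+1$ and then to handle that case by an induction on $d$ built on the ham-sandwich theorem.

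\emph{Reduction to $d+1$ sets.} The order type of a transversal $(p_1,\dots,p_k)$ is precisely the list of orientations $\chi(p_{i_1},\dots,p_{i_{d+1}})$ over all $(d+1)$-element index sets $\{i_1<\dots<i_{d+1}\}$, so two transversals have the same order type as soon as they agree on every such index set. Hence it suffices to prove the local version: there is $\gamma_d>0$ such that any $d+1$ finite sets $X_1,\dots,X_{d+1}$ in general position in $\RR^d$ have subsets $Y_i\subseteq X_i$ with $|Y_i|\ge\gamma_d|X_i|$ for which all transversals of $(Y_1,\dots,Y_{d+1})$ have a common orientation. Granting this, I would begin from an arbitrary splitting of $P$ into $k$ nearly equal parts and then, running over the $\binom{k}{d+1}$ index $(d+1)$-tuples in turn, at each step replace the $d+1$ currently chosen parts on the current index set by the subsets furnished by the local version (leaving the other parts alone). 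Passing to smaller subsets never undoes an orientation that has already been made constant, so after all $\binom{k}{d+1}$ steps every $(d+1)$-tuple has constant orientation, i.e.\ $(P_1,\dots,P_k)$ has same-type transversals, and each part has been shrunk by a factor $\gamma_d$ at most $\binom{k}{d+1}$ times; this yields $|P_i|\ge\varepsilon(d,k)\,n$ with $\varepsilon(d,k)=\gamma_d^{\binom{k}{d+1}}/(2k)$.

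\emph{A sufficient condition for the local version.} Call $(Y_1,\dots,Y_{d+1})$ \emph{well-separated} if every hyperplane of $\RR^d$ lies strictly on one side of at least one of $\conv(Y_1),\dots,\conv(Y_{d+1})$. I claim this already forces a common orientation. Indeed, the $(d+1)\times(d+1)$ determinant attached to $(c_1,\dots,c_{d+1})$ in the definition of $\chi$ is a continuous function of $(c_1,\dots,c_{d+1})$ on the connected set $\conv(Y_1)\times\dots\times\conv(Y_{d+1})$; if its sign were not constant there it would vanish at some point, meaning some transversal of the hulls is affinely dependent and hence lies on a common hyperplane, contradicting well-separatedness. (Conversely, a hyperplane meeting every $\conv(Y_i)$ yields, via Radon's theorem applied to one point of each intersection inside that $(d-1)$-flat, a nontrivial affine dependence; so well-separatedness is in fact equivalent to constant orientation, but only the direction used here is needed.) Thus the local version is reduced to: any $d+1$ finite sets in general position in $\RR^d$ contain well-separated subsets of constant fraction.

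\emph{Finding well-separated subsets, and the main obstacle.} This I would prove by induction on $d$. For $d=1$ there are two sets on a line; taking the lower half of whichever has the smaller median together with the upper half of the other gives two sets with disjoint convex hulls, trivially well-separated, each of size $\ge|X_i|/2$. For $d\ge 2$, apply the ham-sandwich theorem to $X_1,\dots,X_d$ to get a hyperplane $H$ bisecting all of them; then $X_{d+1}$ has at least half of its points strictly on one side $H^+$, and I would keep that half as $Y_{d+1}$ and the $H^-$-halves of $X_1,\dots,X_d$. After a suitable projective transformation that sends $H$ to the hyperplane at infinity — which preserves being well-separated for a configuration all of whose convex hulls avoid $H$, and which pushes $Y_{d+1}$ far out in the direction transverse to $H$ while leaving the other $d$ sets a bounded configuration that may be regarded as living in $\RR^{d-1}$ — I would invoke the induction hypothesis on those $d$ sets, obtain well-separated subsets $Y_1,\dots,Y_d$, and pull everything back. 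A hyperplane missing $\conv(Y_{d+1})$ causes no trouble, and a hyperplane hitting $\conv(Y_{d+1})$ is, because $Y_{d+1}$ has been pushed far out, forced to be so nearly parallel to $H$ where it meets the remaining sets that its behaviour there is controlled by the $(d-1)$-dimensional well-separatedness of $Y_1,\dots,Y_d$, so that some $\conv(Y_i)$ is still missed; each round costs only a constant factor, so after $d$ rounds one obtains $\gamma_d>0$. I expect this inductive step to be the real obstacle: a single ham-sandwich cut separates $Y_{d+1}$ only from the \emph{union} of the other sets, whereas well-separatedness requires separating every two-group partition, and making the ``escape to infinity'' precise, verifying that every hyperplane (including those not parallel to $H$) misses some hull, and keeping all the sets in general position through the projective map is where the genuine work lies (this is essentially B\'ar\'any and Valtr's argument). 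Everything else — the reduction to $d+1$ sets, the determinant/Radon argument, and the one-dimensional base case — is routine.
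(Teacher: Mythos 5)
Your reduction to $(d+1)$-tuples and your sufficient condition are both sound: once every $(d+1)$-tuple of parts has constant orientation, the whole $k$-tuple has same-type transversals; constancy survives passing to subsets; and the determinant/connectedness argument correctly shows that if no hyperplane meets all of $\conv(Y_1),\ldots,\conv(Y_{d+1})$ then all transversals have the same orientation. The problem is that the remaining step --- extracting from $d+1$ sets positive-fraction subsets that no hyperplane can stab simultaneously --- is the actual content of the lemma, and for it you offer only a plan that, as described, would not go through. A single ham-sandwich cut $H$ gives no quantitative separation: $\conv(Y_{d+1})$ may hug $H$, and no projective change of coordinates applied to the \emph{given} points makes it genuinely ``far out'' while keeping the rest a bounded configuration, so a hyperplane meeting $\conv(Y_{d+1})$ and the hulls of $Y_1,\ldots,Y_d$ is not forced to be nearly orthogonal to $H$ (nor ``nearly parallel to $H$,'' as you write --- the limiting picture, if one could push $Y_{d+1}$ to infinity, would be hyperplanes containing the normal direction, whose traces are governed by the projections onto $H$). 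Hence the trace of such a hyperplane near $Y_1,\ldots,Y_d$ is not controlled by any $(d-1)$-dimensional configuration, and the induction on dimension does not close. You flag this yourself as ``the real obstacle''; it is not a technicality but the heart of the proof, so the proposal has a genuine gap.

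The standard way to close it (B\'ar\'any--Valtr) is different from your dimension-reduction scheme: it suffices to arrange, for each of the $2^d-1$ partitions of $\{1,\ldots,d+1\}$ into two nonempty blocks $I$ and $J$, that $\conv\bigl(\bigcup_{i\in I}Y_i\bigr)$ and $\conv\bigl(\bigcup_{j\in J}Y_j\bigr)$ are separated by a hyperplane (this implies your ``no stabbing hyperplane'' condition via exactly the Radon argument in a $(d-1)$-flat that you mention parenthetically). Each partition is handled by its own ham-sandwich cut: bisect $d$ of the current sets, place the larger half of the one remaining set with its own block, and keep the appropriate halves of the others; each set loses only a constant factor, and separations already achieved survive passing to subsets, so after all partitions one has the desired $\gamma_d>0$, and your outer iteration over $(d+1)$-tuples then finishes the lemma. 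Note also that the paper itself does not reprove this statement --- it quotes it from \cite{BV} --- and its own contribution, Lemma~\ref{newsame}, obtains the polynomially decaying $\varepsilon$ by an entirely different route, namely the Fox--Pach--Suk regularity lemma (Lemma~\ref{reg}) combined with Tur\'an's theorem; even if completed, your argument would give only an $\varepsilon$ exponentially small in $k$, i.e.\ the cited qualitative result rather than the paper's refinement.
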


\noindent Their proof shows that $\varepsilon = 2^{-O(k^{d-1})}$.  This was later improved by Fox, Pach, and Suk \cite{FPS} who showed that Lemma \ref{same} holds with $\varepsilon = 2^{-O(d^3k\log k)}$.  We will use the following result, which was communicated to us by Jacob Fox, which shows that Lemma \ref{same} holds with $\varepsilon$ decaying only polynomially in $k$ for fixed $d\geq 3$.

\begin{Lem}\label{newsame}

Lemma \ref{same} holds for $\varepsilon = k^{-c_d}$, where $c_d$ depends only on $d$.
\end{Lem}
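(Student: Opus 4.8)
The plan is to prove Lemma \ref{newsame} by following the same strategy used for Theorem \ref{main2}, but now applied to the order type rather than to the combinatorial property of being mutually avoiding. The key observation is that the proof of Theorem \ref{main2} really only used two features of the "mutually avoiding" property: (i) it holds for every sufficiently large point set in a hereditary way, i.e. by Theorem \ref{crossingr2} every point set of bounded size contains a ``good'' configuration of prescribed size, and (ii) once a good configuration is fixed, the ``extra'' points that can extend or refine it must lie in a bounded number of geometrically defined cells. The Same-Type Lemma of B\'ar\'any and Valtr (Lemma \ref{same}) provides exactly analogue (i) for order types: it already guarantees that \emph{some} $\varepsilon(d,k) > 0$ works; what we want is to bootstrap this to a polynomial dependence on $k$.

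First I would recall that the number of distinct order types on $m$ labeled points in $\RR^d$ is at most $m^{O(d^2 m)}$ (Goodman--Pollack, Alon, Warren), so in particular there is an absolute constant $N_0 = N_0(d)$ such that any $N_0$ points in general position in $\RR^d$ contain $3k$ points in ``convex position of a fixed type,'' or more precisely contain a $3k$-point subset realizing some fixed order type $\tau$; here we only need that the relevant configuration has $|P_i| = 3$ slack available, mirroring the role played by the residues ``$\equiv 1 \bmod 4$'' in the planar argument. Then I would count, as in equation (2.1) and the displayed chain after it, the number of ordered $3k$-tuples in $P$ of type $\tau$: it is at least $\binom{n}{3k} / \binom{N_0}{3k}$, which is $\Omega_d(n^{3k})$ up to a factor that is exponential in $k$ but with base depending only on $d$. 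Next, as in the definition of ``supports,'' I would designate a canonical $k$-point ``skeleton'' inside each such $3k$-tuple and observe that each skeleton $S = (s_1,\dots,s_k)$ of the right order type is extended to a full $3k$-tuple of type $\tau$ only by choosing the remaining $2k$ points, and — crucially — each such extra point is forced, by the order-type constraints relative to the fixed skeleton, to lie in one of $2k$ cells $\mathcal C_1,\dots,\mathcal C_{2k}$ determined by hyperplanes spanned by skeleton points. Letting $\gamma_j$ be the number of points of $P$ in $\mathcal C_j$, we get $\Omega_d(n^{3k}) \le (\text{exp. in }k) \cdot \prod_j \binom{\gamma_j}{\text{const}} \le (\text{exp. in }k)\cdot \prod_j \gamma_j^{O(1)}$ for the best skeleton, exactly parallel to the displayed inequality before Claim 2.3.

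From here the argument is the AM--GM / pigeonhole step of Claim 2.3 verbatim: relabel so $\gamma_1 \le \dots \le \gamma_{2k}$, and deduce that a positive fraction (a constant fraction for fixed $d$) of the cells satisfy $\gamma_j \ge n / k^{c_d}$ for a suitable $c_d(d)$; taking $P_i := P \cap \mathcal C_{j_i}$ for $k$ such indices gives $k$ subsets, each of size $\Omega(n/k^{c_d})$, and by construction any transversal together with the fixed skeleton realizes type $\tau$, so in particular all transversals of $(P_1,\dots,P_k)$ have the same order type. The main obstacle I anticipate is purely bookkeeping rather than conceptual: one must set up the ``skeleton + cells'' decomposition for order types in $\RR^d$ so that (a) the cells are genuinely determined by $O(k)$ hyperplanes through skeleton points — so that the exponent $c_d$ depends only on $d$ and not on $k$ — and (b) membership of an extra point in a cell, together with the skeleton, really does pin down its orientation against every $d$-subset of the final configuration. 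This is where the higher-dimensional geometry is more delicate than the planar ``radial order'' picture: in the plane the two bounding points $b'_1, b'_{10k}$ suffice to cut out each region, whereas in $\RR^d$ one needs a careful choice of which $d-1$ skeleton points, together with the two consecutive ones, bound each cell, and a verification (using that $\tau$ is a single fixed type) that no further orientations are left undetermined. Once that combinatorial geometry lemma is in place, the counting and AM--GM steps go through with only the constants changing, yielding $\varepsilon = k^{-c_d}$ as claimed.
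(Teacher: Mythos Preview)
Your approach is genuinely different from the paper's, and it has a real gap, not just a bookkeeping issue.

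The paper does not bootstrap the planar counting argument at all. Its proof of Lemma~\ref{newsame} is a two-line application of an external tool: apply the polynomial regularity lemma of Fox, Pach, and Suk (Lemma~\ref{reg}) with parameter $\varepsilon = 1/(2k)^d$ to get an equitable partition $P = P_1\cup\cdots\cup P_K$ with $K \le (2k)^{dc}$ parts in which all but an $\varepsilon$-fraction of the $(d+1)$-tuples of parts have same-type transversals; then apply Tur\'an's theorem to this $(d+1)$-uniform hypergraph on $K$ vertices to extract $k$ parts all of whose $(d+1)$-tuples are same-type. Each part has size at least $n/K \ge n/(2k)^{dc}$, which is the claimed polynomial bound. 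No counting of configurations, no skeletons, no AM--GM.

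Your proposed route breaks at exactly the step you flagged as ``bookkeeping,'' but the difficulty is structural, not clerical. Suppose you fix a $k$-point skeleton $S$ and define $\mathcal{C}_j$ as the set of points $p$ whose orientation against every $d$-subset of $S$ matches what position $j$ of $\tau$ demands. This is a perfectly good cell, and the counting/AM--GM step does show that many such cells are large. But the conclusion you need is that any transversal $(p_1,\ldots,p_k)$ with $p_i\in \mathcal{C}_{j_i}$ has a fixed order type, and that requires controlling the orientation of $(d+1)$-tuples drawn \emph{entirely from the cells}, with no skeleton points involved. The cell membership says nothing about those orientations. In the planar mutually-avoiding argument this is not an issue because the property is one-dimensional (a cyclic radial order), and a skeleton point sitting between $\mathcal{A}_i$ and $\mathcal{A}_j$ forces the relative radial position of any $p\in\mathcal{A}_i$ and $q\in\mathcal{A}_j$. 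For a general order type in $\RR^d$ there is no such separating skeleton point: the sign of a $(d+1)$-tuple $(p_{i_1},\ldots,p_{i_{d+1}})$ from $d+1$ distinct cells is governed by a hyperplane through $d$ of the $p$'s, and nothing in the definition of the cells pins that sign down. Establishing that it \emph{is} pinned down is essentially the content of the same-type lemma itself, so the argument is circular unless you supply a genuinely new geometric ingredient.
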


The proof of Lemma \ref{newsame} is a simple application of the following regularity lemma due to Fox, Pach, and Suk.  A partition on a finite set $P$ is called \emph{equitable} if any two parts differ in size by at most one.

\begin{Lem}[Theorem 1.3 in  \cite{FPS}]\label{reg}

For $d > 0$, there is a constant $c = c(d)$ such that the following holds.  For any $\varepsilon > 0$ and for any $n$-element point sequence $P  = (p_1,\ldots, p_n)$ in $\RR^d$, there is an equitable partition $P = P_1\cup \cdots \cup P_K$, with $1/\varepsilon < K < (1/\varepsilon)^{c}$, such that all but at most $\varepsilon {K\choose d+1}$ $(d+ 1)$-tuples of parts $(P_{i_1},\ldots, P_{i_{d+1}})$ have same-type transversals.
\end{Lem}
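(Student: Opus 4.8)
This is the polynomial regularity lemma for semialgebraic hypergraphs of \cite{FPS} applied to one specific hypergraph, so the plan is to exhibit that hypergraph and then recall the shape of the argument. Given $P=(p_1,\ldots,p_n)$ in $\RR^d$ in general position, write $p_i=(a_{i,1},\ldots,a_{i,d})$ and colour each ordered $(d+1)$-tuple $(q_1,\ldots,q_{d+1})$ of $P$ by $\chi(q_1,\ldots,q_{d+1})\in\{+1,-1\}$, the sign of the determinant in the definition above; this is the sign of one polynomial $F$ of degree $d+1$ in the $d(d+1)$ coordinates of the $q_i$, and by general position $F$ does not vanish on any $(d+1)$-tuple of distinct points of $P$, so $E:=\{\chi=+1\}$ is semialgebraic of description complexity depending only on $d$. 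Since $F$ is multilinear in the homogeneous coordinates $(1,q_i)$, fixing all arguments but one leaves an affine halfspace, and the only hypersurfaces arising as ``links'' are the at most $\binom nd$ affine hyperplanes spanned by $d$-subsets of $P$. Finally, a $(d+1)$-tuple of parts $(P_{i_1},\ldots,P_{i_{d+1}})$ has same-type transversals exactly when $F$ has constant sign on $P_{i_1}\times\cdots\times P_{i_{d+1}}$; by changing one coordinate at a time (all intermediate configurations consisting of distinct points of $P$, hence non-degenerate) it suffices that for each $j$ no hyperplane spanned by one point from each $P_{i_\ell}$, $\ell\ne j$, separates two points of $P_{i_j}$, and this in turn holds whenever $\conv(P_{i_1})\times\cdots\times\conv(P_{i_{d+1}})$ avoids $\{F=0\}$.

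I would then build the partition by a bounded-depth recursive application of a polynomial (equivalently, Matou\v{s}ek-type simplicial) partitioning of $P$ relative to the hyperplane range space $(P,\mathcal H)$: for a parameter $r=(1/\varepsilon)^{\Theta_d(1)}$ this gives an equitable partition of $P$ into $K=(1/\varepsilon)^{O_d(1)}$ parts, each contained in a single cell; since $d$ is fixed the recursion stops after boundedly many rounds, so $K$ is polynomial in $1/\varepsilon$ and one can arrange $1/\varepsilon<K<(1/\varepsilon)^c$. It then remains to show that all but $\varepsilon\binom K{d+1}$ of the $(d+1)$-tuples of parts meet the sufficient condition above. A tuple is spoilt only when some hyperplane spanned by one point from each of $d$ of its parts separates two points of the remaining part, and one bounds the number of such tuples by combining the cutting estimate for (hyperplane, split-part) incidences with the fact — guaranteed by the choice of partition and exploited heavily in \cite{FPS} — that when the parts are ``fat'' (not concentrated near any hyperplane) a spoilt tuple is witnessed simultaneously by many hyperplanes, so the number of distinct spoilt tuples is only a small fraction of the incidence count.

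The step I expect to be the main obstacle is exactly this last count. A naive estimate runs through the $\Theta(n^d)$ hyperplanes spanned by $P$ and is hopeless, since no partition into $\mathrm{poly}(1/\varepsilon)$ parts can stop most parts from being crossed by most of those hyperplanes — already for a $k\times\cdots\times k$ grid. Making the bound on the number of spoilt tuples independent of $n$ is precisely what needs the full semialgebraic partitioning-and-counting machinery of \cite{FPS}, and is where essentially all of the work lies; the general-position hypothesis enters in an essential way here, through the bound of $d$ on the number of points of $P$ lying on a common hyperplane.
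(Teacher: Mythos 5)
There is nothing in the paper to compare your argument with: Lemma~\ref{reg} is imported verbatim from Fox--Pach--Suk \cite{FPS} and used as a black box, with no proof given here. Measured against that, your first paragraph is exactly the reduction one needs in order to recognize the statement as an instance of the semialgebraic regularity lemma of \cite{FPS}: the orientation of a $(d+1)$-tuple is the sign of a single determinantal polynomial $F$ of degree bounded in terms of $d$ in the $d(d+1)$ coordinates, each ``link'' (fixing $d$ of the points) is a halfspace bounded by the hyperplane they span, so the $(d+1)$-ary relation has description complexity depending only on $d$, and same-type transversals for a $(d+1)$-tuple of parts is precisely homogeneity of this relation on the product of the parts (indeed, since the product of the convex hulls is convex, hence connected, constancy of the sign already follows once $F$ is nonvanishing there). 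Stopping at that point and citing \cite{FPS} would be a legitimate proof of the lemma as used in this paper, and is in effect what the authors do.

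As a standalone proof, however, your sketch has a gap that you yourself identify and that is not a detail but the entire content of the lemma: after proposing a Matou\v{s}ek-type partition with respect to the hyperplane range space, you state that bounding the number of non-homogeneous $(d+1)$-tuples of parts ``needs the full semialgebraic partitioning-and-counting machinery of \cite{FPS}, and is where essentially all of the work lies.'' Nothing in your second and third paragraphs supplies that count; the proposed mechanism (spoilt tuples of ``fat'' parts being witnessed simultaneously by many spanned hyperplanes) is asserted rather than proved, and it is not something you can source to \cite{FPS} in this form --- their bound comes from cuttings for arrangements of bounded-complexity algebraic surfaces attached to tuples of the point set, not from a multiplicity count over the $\Theta(n^d)$ hyperplanes spanned by $P$. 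A further small inaccuracy: the lemma as stated carries no general position hypothesis, so the bound of $d$ on the number of points of $P$ on a common hyperplane cannot be ``essential''; in this paper general position only ensures that the sign function is never zero on the configurations to which the lemma is subsequently applied. In short, either present the lemma as a direct corollary of the regularity theorem of \cite{FPS} (your first paragraph suffices for that and mirrors the paper), or supply the missing counting argument; as written, the proposal does neither.
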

\noindent Let us note that $K > 1/\varepsilon$ follows by first arbitrarily partitioning $P$ into $\lceil 1/\varepsilon \rceil$ parts, such that any two parts differ in size by at most one, and then following the proof of Theorem 1.3 in \cite{FPS}.

\medskip

The next lemma we will use is Tur\'an's Theorem for hypergraphs. Given an $r$-uniform hypergraph $\mathcal{H}$, let $ex(n,\mathcal{H})$ denote the maximum number of edges in any $\mathcal{H}$-free $r$-uniform hypergraph on $n$ vertices.

\begin{Lem}[de Caen \cite{DDC}]\label{TT} Let $K^r_k$ denote the complete $r$-uniform hypergraph on $k$ vertices. Then
\begin{align*}
 \textrm{ex}\hspace{0.6mm}(n, K^r_k) \leq \left( 1- \frac{1}{\binom{k-1}{r-1} }+o(1) \right) \binom{n}{r}.
\end{align*}

\end{Lem}

\begin{proof}[Proof of Lemma \ref{newsame}]

Let $P = (p_1,\ldots, p_n)$ be an $n$-element point sequence in $\RR^d$ in general position.  Set $\varepsilon = 1/(2k)^d$, and apply Lemma \ref{reg} to $P$ with parameter $\varepsilon$ to obtain the equitable partition $P = P_1\cup \cdots \cup P_K$ with the desired properties.  Hence $|P_i| \geq n/(2k)^{d\cdot c}$, where $c$ is defined in Lemma \ref{reg}.  Since all but at most $\varepsilon {K\choose d+1}$ $(d+ 1)$-tuples of parts $(P_{i_1},\ldots, P_{i_{d+1}})$ have same-type transversals, we can apply Lemma \ref{TT} to obtain $k$ parts $P'_1,\ldots, P'_k \in \{P_1,\ldots, P_K\}$ such that all $(d+ 1)$-tuples $(P'_{i_1},\ldots, P'_{i_{d+1}})$ in $\{P'_1,\ldots, P'_k\}$ have same-type transversals.\end{proof}

\begin{proof}[Proof of Theorem \ref{rd}]
Let $k > 0$ and let $P$ be an $n$-element point set in $\RR^d$ in general position.  We will order the elements of $P  = \{p_1,\ldots, p_n\}$ by increasing first coordinate, breaking ties arbitrarily.  Let $c' = c'(d)$ be a sufficiently large constant that will be determined later.  We apply Lemma \ref{newsame} to $P$ with parameter $k' = \lceil k^{c'}\rceil$ to obtain subsets $P_1,\ldots, P_{k'}\subset P$ such that $|P_i| \geq k^{-c_dc'}n$, where $c_d$ is defined in Lemma \ref{newsame}, such that all $(d+ 1)$-tuples $(P_{i_1},\ldots, P_{i_{d+1}})$ have same-type transversals.  Let $P'$ be a $k'$-element subset obtained by selecting one point from each subset $P_i$.  By applying Theorem \ref{avoidrd} to $P'$, we obtain subsets $A,B \subset P'$ such that $A$ and $B$ are mutually avoiding, and $|A|,|B| \geq \Omega((k')^{1/(d^2 - d + 1)})$. By choosing $c' = c'(d)$ sufficiently large, we have $|A|,|B| \geq k$.  Let $\{a_1,\ldots, a_k\} \subset A$ and $\{b_1,\ldots, b_k\} \subset B$.  Then the subsets $A_1,\ldots, A_k,B_1,\ldots, B_k \in \{P_1,\ldots, P_{k'}\}$, where $a_i \in A_i$ and $b_i \in B_i$, are as required in the theorem.\end{proof}

\subsection{Crossing Families in Higher Dimensions}

Let $P$ be an $n$-element point set in $\RR^d$ in general position.  A \emph{$(d-1)$-simplex} in $P$ is a $(d-1)$-dimensional simplex generated by taking the convex hull of $d$ points in $P$. We say that two $(d-1)$-simplices \emph{strongly cross} in $P$ if their interiors intersect and they do not share a common vertex.  A \emph{crossing family} of size $k$ in $P$ is a set of $k$ pairwise strongly crossing $(d-1)$-simplices in $P$.

In \cite{1}, Aronov et al.~stated that Theorem \ref{avoidrd} implies that every point set $P$ in $\RR^d$ in general position contains a polynomial-sized crossing family, that is, a collection of $(d-1)$-simplices in $P$ such that any two strongly cross.  Since they omitted the details, below we provide the construction of a crossing family using  mutually avoiding sets in $\mathbb{R}^d.$

\begin{Cor}\label{rdcross}
Let $d\geq 2$ and let $P$ be a set of $n$ points in $\RR^d$ in general position.  Then $P$ contains a crossing family of size $\Omega(\sqrt{n})$ for $d=2$, and of size $\Omega_d(n^{\frac{1}{2\prod_{i=3}^{d}(i^2-i+1)}})$ for $d \geq 3.$
\end{Cor}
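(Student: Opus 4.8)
The plan is to prove Corollary \ref{rdcross} by induction on the dimension $d$, using Theorem \ref{avoidrd} to peel off one dimension at a time. The base case $d = 2$ is immediate: by Theorem \ref{crossingr2} we find a pair of mutually avoiding sets $A, B$ of size $\Omega(\sqrt n)$, and matching $a_i \in A$ with $b_i \in B$ in the natural way (so that the segments pairwise cross — this is exactly the mutual avoidance condition) gives a crossing family of $1$-simplices of size $\Omega(\sqrt n)$.

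For the inductive step, suppose $d \geq 3$ and that every point set of size $m$ in $\RR^{d-1}$ in general position contains a crossing family of size $f_{d-1}(m)$. Given $P \subset \RR^d$ with $|P| = n$, first apply Theorem \ref{avoidrd} to obtain mutually avoiding subsets $A, B \subset P$, each of size $m = \Omega_d(n^{1/(d^2 - d + 1)})$. The key geometric observation is that because $A$ avoids $B$ and $B$ avoids $A$, every hyperplane spanned by $d$ points of $A$ misses $\conv(B)$ and vice versa; in particular, there is a hyperplane $H$ strictly separating $\conv(A)$ from $\conv(B)$. Project $A$ onto $H$ (or any generic hyperplane) to get a point set $A' \subset \RR^{d-1}$ in general position. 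Apply the inductive hypothesis to $A'$ to obtain a crossing family of $(d-2)$-simplices on $f_{d-1}(m)$ vertices of $A'$; lifting back, these correspond to $d$-tuples in $A$. The plan is to then "cone" each such $(d-2)$-simplex in $A$ to a single well-chosen point of $B$: this produces a $(d-1)$-simplex on $d$ vertices of $A$ plus... wait — a $(d-2)$-simplex has $d-1$ vertices, and coning to one point of $B$ gives a $(d-1)$-simplex on $d$ vertices. We need these coned simplices to pairwise strongly-cross.

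The heart of the argument, and the step I expect to be the main obstacle, is verifying that the coning operation preserves strong crossing. Here mutual avoidance is used crucially: because $A$ and $B$ are separated by the hyperplane $H$, each cone from a simplex $\sigma \subset A$ to an apex in $B$ is, roughly, the join of $\sigma$ with a segment reaching across $H$, and the "$A$-side" parts of two such cones strongly cross if and only if the underlying $(d-2)$-simplices in $A'$ strongly cross — which they do by construction. One must be careful about the apex choices in $B$: if we cone all simplices to the \emph{same} point $b \in B$, the resulting $(d-1)$-simplices all share the vertex $b$ and hence do not satisfy the "no common endpoint" requirement. So instead we should use two crossing families coming from \emph{two} projections, or split $A$ into two halves $A_1, A_2$ and $B$ into two halves $B_1, B_2$ and build simplices spanning $A_1 \cup B_1$ that strongly cross simplices spanning $A_2 \cup B_2$ — the cleanest route is probably to mimic the $d=2$ case structurally: find mutually avoiding $A, B$, recurse on a projection of $A$ to get a crossing family $\{\sigma_1,\ldots,\sigma_t\}$ in $A$, recurse (or reuse) on $B$ to get a crossing family $\{\tau_1,\ldots,\tau_t\}$ in $B$ with the same index set, and check that $\conv(\sigma_i \cup \tau_i)$ for $i = 1, \ldots, t$ are pairwise strongly-crossing $(2d-2)$-dimensional... no, that overshoots the dimension. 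The dimension bookkeeping has to be done carefully so that the final simplices are $(d-1)$-dimensional, which forces the recursion to drop dimension by exactly one and to replace "pair of points across $H$" with "single point of $B$" — so the two-families-same-index-set version with apexes chosen from $B$ so that distinct simplices get distinct apexes is the version to push through.

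Finally, I would unwind the recursion to get the size bound. Each step multiplies the current size by applying Theorem \ref{avoidrd} (losing an exponent of $1/(d^2-d+1)$ at dimension $d$) and then applying the inductive crossing-family bound to a set of that reduced size. Composing from dimension $d$ down to the base case $d = 2$ (where we pick up the $\Omega(\sqrt{\cdot})$), the exponents multiply: we get $\Omega_d\!\left(n^{\frac{1}{2}\cdot \prod_{i=3}^{d} \frac{1}{i^2 - i + 1}}\right)$, i.e. $\Omega_d\!\left(n^{\frac{1}{2\prod_{i=3}^{d}(i^2 - i + 1)}}\right)$, matching the statement; for $d = 2$ the empty product is $1$ and we recover $\Omega(\sqrt n)$. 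The constants hidden in $\Omega_d$ accumulate multiplicatively over the $d-1$ levels of recursion but depend only on $d$, which is all that is claimed.
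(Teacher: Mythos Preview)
Your overall skeleton --- induction on $d$, mutually avoiding sets from Theorem~\ref{avoidrd}, a separating hyperplane, recursion one dimension lower, coning, and the exponent computation --- matches the paper exactly, and your arithmetic at the end is correct. But the step you yourself flag as the main obstacle is a genuine gap, and the argument you sketch for it does not work. You write that ``the $A$-side parts of two such cones strongly cross if and only if the underlying $(d-2)$-simplices in $A'$ strongly cross''; this is false. A generic (or orthogonal) projection $A \to A' \subset H$ gives no control over intersections of the lifted $(d-2)$-simplices in $\RR^d$ --- two $(d-2)$-simplices in $\RR^d$ generically do not meet at all. More to the point, the slice $\conv(\sigma \cup b) \cap H$ is the \emph{central} projection of $\sigma$ from the apex $b$, so whether two cones meet on $H$ depends on which apexes were used, and you never provide a mechanism for comparing central projections from different apexes. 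Your closing suggestion (``apexes chosen from $B$ so that distinct simplices get distinct apexes'') names the right construction but not the reason it succeeds.

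The missing idea, which the paper supplies, is this: do not project $A$ generically. Instead, for each $a_i \in A$ project $B$ onto $H$ \emph{centrally through $a_i$}, obtaining $B_i \subset H$. Because $A$ and $B$ are mutually avoiding, every point of $A$ lies on the same side of every hyperplane spanned by $d$ points of $B$; this forces all the sets $B_i$ to have the \emph{same order type}. One applies the inductive hypothesis to $B_1$ to get pairwise crossing $(d-2)$-simplices $\mathcal S_1,\dots,\mathcal S_{k'}$ in $H$, lifts these to $(d-2)$-simplices $\mathcal S'_1,\dots,\mathcal S'_{k'}$ in $B$, and cones each $\mathcal S'_i$ to the apex $a_i$. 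The same-order-type property is exactly the engine that makes the distinct-apex coning work: the slice of $\conv(a_i \cup \mathcal S'_i)$ by $H$ is the copy of $\mathcal S_i$ inside $B_i$, and since the crossing pattern found in $B_1$ is replicated verbatim in every $B_i$, one can deduce that any two cones $\mathcal S^*_i, \mathcal S^*_j$ intersect. This observation --- that mutual avoidance forces all central projections $B_i$ to share an order type --- is the geometric heart of the argument, and it is what your proposal is missing.
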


\begin{proof}
We proceed by induction on $d$. The base case $d=2$ follows from Theorem \ref{crossingr2}: a pair of mutually avoiding sets $A$ and $B$ in the plane, each of size $\Omega(\sqrt{n})$, gives rise to a crossing family of size $\Omega({\sqrt{n}}).$  For the inductive step, assume the statement holds for all $d' < d$.

Let $P$ be a set of $n$ points in $\RR^d$ in general position.  By Theorem \ref{avoidrd}, there is a pair of mutually avoiding sets $A$ and $B$ such that $|A| = |B| = k=\Omega_d({n^{\frac{1}{d^2-d+1}}})$.  Let $A = \{a_1,\ldots, a_k\}$ and $B = \{b_1,\ldots, b_k\}$.  Since $conv(A)\cap conv(B) = \emptyset$, by the separation theorem (see Theorem~1.2.4 in \cite{6}), there is a hyperplane $\mathcal{H}$ such that $A$ lies in one of the closed half-spaces determined by $\mathcal{H}$, and $B$ lies in the opposite closed half-space.

For each $a_i \in A$, let $a_ib$ be the line generated by points $a_i$ and $b \in B$.  Then set $B_i  = \{a_ib\cap \mathcal{H}: b \in B\}$.  Since $P$ is in general position, $B_i$ is also in general position in $\mathcal{H}$ for each $i$.  Moreover, since $A$ and $B$ are mutually avoiding, $B_i$ has the same order-type as $B_j$ for every $i \neq j$.  Indeed, for any $d$-tuple $b_{i_1},b_{i_2},\ldots, b_{i_d} \in B$, every point in $A$ lies on the same side of the hyperplane generated by $b_{i_1},b_{i_2},\ldots, b_{i_d}$.  Hence the orientation of the corresponding $d$-tuple in $B_i\subset \mathcal{H}$ will be the same as the orientation of the corresponding $d$-tuple in $B_j\subset \mathcal{H}$ for $i\neq j$.  Therefore, let us just consider $B_1\subset \mathcal{H}$. By the induction hypothesis, there exists a crossing family of $(d-2)$-simplices of size $$k'=\Omega_d\left(k^{\frac{1}{2\prod_{i=3}^{d-1}(i^2-i+1)}}\right)=\Omega_d\left(n^{\frac{1}{2\prod_{i=3}^{d}(i^2-i+1)}}\right),$$ in $B_1 \subset \mathcal{H}.$ Let $\mathcal{S}=\{\mathcal{S}_1,\ldots, \mathcal{S}_{k'}\}$ be our set of pairwise crossing $(d-2)$-simplices in $B_1\subset \mathcal{H}$ and let $\mathcal{S}' = \{\mathcal{S}'_1,\ldots, \mathcal{S}'_{k'}\}$ be the corresponding $(d-2)$-simplices in $B$ (which may or may not intersect).

Set $\mathcal{S}_i^{\ast} = \conv(a_i\cup \mathcal{S}'_i)$.  Then $\mathcal{S}^{\ast}_1,\ldots, \mathcal{S}^{\ast}_{k'}$ is a set of $k'$ pairwise crossing $(d -1)$-simplices in $\RR^d$.  Indeed, consider $\mathcal{S}^{\ast}_i$ and $\mathcal{S}^{\ast}_j$.  If $\mathcal{S}'_i\cap \mathcal{S}'_j \neq \emptyset$, then we are done.  Otherwise, we would have $\mathcal{S}'_j \cap \mathcal{S}^{\ast}_i \neq \emptyset$ or $\mathcal{S}'_i \cap \mathcal{S}^{\ast}_j \neq \emptyset$ since $B_i$ and $B_j$ have the same order type and $\mathcal{S}_i\cap \mathcal{S}_j \neq \emptyset$. More precisely, let $r_i$ be a ray from $a_i$ through an intersection point of $\mathcal{S}_i$ and $\mathcal{S}_j.$ The ray $r_i$ intersects both $\mathcal{S}^{'}_{i}$ and $\mathcal{S}^{'}_{j}$ by the definition of $\mathcal{S}_i$ and $\mathcal{S}_j.$ Without loss of generality assume $r_i$ intersects $\mathcal{S}_{i}$ first. It follows that $\mathcal{S}^{'}_{i} \cap \mathcal{S}^{\ast}_{j} \neq \emptyset.$ \end{proof}

\bigskip
\noindent \textbf{Acknowledgment.} We would like to thank the referees for their helpful remarks.

\bibliographystyle{amsplain}

\end{document}